\theoremstyle{plain}
\newtheorem{theorem}{Theorem}[section]
\crefname{section}{Section}{}
\crefname{theorem}{Theorem}{}
\newtheorem{proposition}[theorem]{Proposition}
\crefname{proposition}{Proposition}{}
\newtheorem{lemma}[theorem]{Lemma}
\crefname{lemma}{Lemma}{Lemmas}
\newtheorem{corollary}[theorem]{Corollary}
\theoremstyle{definition}
\newtheorem{subsec}[theorem]{}
\crefname{subsec}{Subsection}{}
\newtheorem{remark}[theorem]{Remark}
\crefname{remark}{Remark}{}
\newtheorem{definition}[theorem]{Definition}
\crefname{definition}{Definition}{}
\newtheorem{example}[theorem]{Example}
\crefname{example}{Example}{}
\newtheorem*{example*}{Example}
\newcommand{\R}{{\mathbb{R}}}
\newcommand{\Q}{{\mathbb{Q}}}
\newcommand{\Z}{{\mathbb{Z}}}
\newcommand{\G}{{\mathbb{G}}}
\newcommand{\sX}{{\mathcal X}}
\newcommand{\sV}{{\mathcal V}}
\newcommand{\sP}{{\mathcal P}}
\newcommand{\sD}{{\mathcal{D}}}
\newcommand{\sC}{{\mathcal C}}
\newcommand{\sF}{{\mathcal F}}
\newcommand{\sA}{{\mathcal A}}
\newcommand{\sN}{{\mathcal{N}}}
\newcommand{\sE}{{\mathcal{E}}}
\newcommand{\vs}{\varsigma}
\newcommand{\ve}{{\varepsilon}}
\newcommand{\vk}{\varkappa}
\newcommand{\into}{\hookrightarrow}
\newcommand{\isoto}{\overset{\sim}{\to}}
\newcommand{\labelto}[1]{\xrightarrow{\makebox[1.5em]{\scriptsize ${#1}$}}}
\newcommand{\charr}{{\rm char\,}}
\newcommand{\Hom}{{\rm Hom}}
\newcommand{\Spec}{{\rm Spec\,}}
\newcommand{\Ker}{{\rm Ker\,}}
\newcommand{\Span}{{\rm Span\,}}
\renewcommand{\dim}{{\rm dim}}
\newcommand{\SAut}{{\rm SAut}}
\newcommand{\Aut}{{\rm Aut}}
\newcommand{\Val}{{\rm Val}}
\newcommand{\val}{{\rm val}}
\newcommand{\Gal}{{\rm Gal}}
\newcommand{\Stab}{{\rm Stab}}
\newcommand{\siga}{{\sigma_\gamma}}
\newcommand{\muga}{{\mu_\gamma}}
\newcommand{\ega}{{\ve_\gamma}}
\newcommand{\upgam}{\hs^\gamma\!}
\newcommand{\qs}{{\rm qs}}
\newcommand{\ad}{{\rm ad}}
\newcommand{\Gbar}{{\overline{G}}}
\newcommand{\Ztil}{{\widetilde{Z}}}
\newcommand{\hs}{\kern 0.75pt}
\newcommand{\X}{{{\sf X}}}
\newcommand{\inn}{\mathrm{inn}}
\newcommand{\Inn}{\mathrm{Inn}}
\newcommand{\BRD}{{\rm BRD}}
\newcommand{\id}{{\rm id}}
\newcommand{\veg}{{\ve_\gamma}}
\newcommand{\Gtil}{{\widetilde{G}}}
\newcommand{\SL}{{\rm SL}}
\newcommand{\Lie}{{\rm Lie\,}}
\renewcommand{\AA}{{\sf A}}
\newcommand{\DD}{{\sf D}}
\newcommand{\Spin}{{\rm Spin}}
\newcommand{\vktil}{{\widetilde \vk}}
\title{Spherical varieties over large fields}
\author{Stephan Snegirov}
\address{Raymond and Beverly Sackler School of Mathematical Sciences,
Tel Aviv University, 6997801 Tel Aviv, Israel}
\email{stephans@mail.tau.ac.il}
\thanks{This research was partially supported by the Israel Science Foundation (grant No. 870/16)}
\keywords{Equivariant form, inner form, algebraic group, spherical homogeneous space}
\subjclass[2010]{%
  20G15
, 12G05
, 14M17
, 14G27
, 14M27
}
\date{\today}
\begin{document}

\begin{abstract}
Let $k_0$ be a field of characteristic 0, $k$ its algebraic closure,  $G$ a connected reductive group defined over $k$.
Let $H\subset G$ be  a spherical subgroup. We assume that $k_0$ is a large field, for example,
$k_0$ is either the field $\R$ of real numbers or a $p$-adic field.
Let $G_0$ be a quasi-split $k_0$-form of $G$.
We show that if $H$ has self-normalizing normalizer, and $\Gamma = \Gal(k/k_0)$ preserves the combinatorial invariants of $G/H$,
then $H$ is conjugate to a subgroup defined over $k_0$, and hence, the $G$-variety $G/H$ admits a $G_0$-equivariant $k_0$-form.
In the case when $G_0$ is not assumed to be quasi-split, we give a necessary and sufficient Galois-cohomological condition
for the existence of a $G_0$-equivariant $k_0$-form of $G/H$.

\end{abstract}

\maketitle

\section{Introduction}
\label{s:intro}

Let $G$ be a connected reductive group over an algebraically closed field $k$ of characteristic 0.

Let $k_0\subset k$ be a subfield such that $k$ is an algebraic closure of $k_0$.
Let $G_0$ be a {\em $k_0$-form} ($k_0$-model) of $G$, i.e. an algebraic group over $k_0$
together with an isomorphism of algebraic $k$-groups $\vk_G \colon G_0\times_{k_0} k\isoto G$.

Let $Y$ be an irreducible $G$-variety. By a {\em  $G_0$-equivariant $k_0$-form} of $Y$ we mean a $k_0$-variety $Y_0$, together with an isomorphism of $k$-schemes $\vk_Y \colon Y_0\times_{k_0}k \isoto Y$ and a morphism of $k_0$-schemes $G_0\times_{k_0}Y_0 \to Y_0$ defining an action of $G_0$ on $Y_0$, such that the following diagram commutes;
\[\xymatrixcolsep{3pc}\xymatrix{
G_{0,k}\times_k Y_{0,k}\ar[r]^-{\theta_{0,k}}\ar[d]_{\vk_G\times_k \vk_Y}&Y_{0,k}\ar[d]^{\vk_Y}\\
G\times_k Y\ar[r]^{\theta}&Y
}
\]
Here $\theta,\theta_{0,k}$ are the $G$-action on $Y$, and the base change to $k$ of the $G_0$-action on $Y_0$ respectively, and $G_{0,k},Y_{0,k}$ are the base-changes to $k$ of $G_0,Y_0$ respectively.

From now on till the end of the Introduction we assume that  $Y$ is a {\em spherical homogeneous space of $G$.}
This means that $Y=G/H$ (with the natural action of $G$) for some algebraic subgroup $H\subset G$
and that a Borel subgroup $B$ of $G$ has an open orbit in $Y$. See Timashev \cite{Tim} and Perrin \cite{Perrin} for surveys on the theory of spherical varieties.

Inspired by the works of Akhiezer and Cupit-Foutou  \cite{ACF},
for a given $k_0$-form $G_0$  of $G$, we ask whether $H$ is conjugate to a subgroup defined over $k_0$, i.e. whether there exists a $k_0$-subgroup $H_0\subset G_0$ such that $H_0 \times_{k_0}k\subset G$ is conjugate to $H$.

More generally, we ask whether there exists a $G_0$-equivariant $k_0$-form $Y_0$ of $Y$ (note that if $H$ is conjugate to $H_0\times_{k_0}k$, then $Y_0:=G_0/H_0$ is a $G_0$-equivariant $k_0$-form of $Y$).

A field $k_0$ is called \emph{large} if for any irreducible $k_0$-variety $Y_0$ having  a smooth $k_0$-point,
the $k_0$-points are Zariski-dense in $(Y_0)_k$; see Pop \cite[Proposition 2.6]{Pop}.
Any field that is complete with respect to a nontrivial absolute value is large; see \cite[Section 1, (A) (2)]{Pop}.
In particular, the field of real numbers $\R$ and any $p$-adic field (a finite extension of the field of $p$-adic numbers $\Q_p$) are large.

Let $T\subset B \subset G$ be a Borel subgroup of $G$ and a maximal torus contained in it. Let
\[\BRD(G)=\BRD(G,T,B)=(X,X^\vee, R, R^\vee,S, S^\vee)\]
denote the based root datum of $G$.
Here, in particular, $X=\X^*(T)=\X^*(B)$ is the character group of $T$ and $B$, $R$ is the root system,
and $S\subset R$ is the system of simple roots defined by $B$; see \Cref{ss:BRD} below for details.

Following Tits \cite[Section 2.3]{Tits}, we consider the $*$-action of $\Gamma$ on $\BRD(G)$ defined by the $k_0$-form $G_0$ of $G$;
see \Cref{ss:star-action} below.
We obtain a homomorphism
\[\ve\colon \Gamma\to\Aut\,\BRD(G).\]

The \emph{combinatorial invariants} of a spherical variety $G/H$ are the \emph{weight lattice} $\sX\subset \X^*(B)=\X^*(T)$,
the \emph{valuation cone}  $\sV\subset V:=\Hom(\sX,\Q)$,
and two finite subsets $\Omega^{(1)},\,\Omega^{(2)}\subset V\times\sP(S)$  related to the \emph{set of colors} of $G/H$.
Here $\sP(S)$ denotes the set of all subsets of $S$. See Borovoi \cite[Section 7]{BG}.

For $\gamma \in \Gamma$,  $\ega$ acts on $\X^*(B)$ and on $S$, and therefore,
it is clear what the phrase ``$\ega$ preserves the combinatorial invariants of $Y=G/H$'' means
(see Borovoi \cite[Section 8]{BG} for details).

The $k_0$-form $G_0$ is called \emph{quasi-split} if it contains a Borel subgroup defined over $k_0$.

The normalizer of a subgroup $H$ of $G$ is denoted by $\sN_G(H)$.

The following theorem is a generalization of Theorem 4.4 of Akhiezer and Cupit-Foutou \cite{ACF}, where the authors consider  the case
when $k_0=\R$ and $G_0$ is split.

\begin{theorem}[\Cref{t:main-qs}]\label{t:main-qs'}
Let $G$ be a reductive group over an algebraically closed field $k$.
Let $H\subset G$ be a spherical subgroup.
Let $k_0\subset k$ be a subfield such that $k$ is an algebraic closure of $k_0$.
Let $G_0$ be a $k_0$-form of $G$.
Assume that:
\begin{enumerate}
	\item[(i)]  $\Gamma:=\Gal(k/k_0)$ preserves the combinatorial invariants of $G/H$ when acting on $\BRD(G)$ via the homomorphism
$\ve\colon\Gamma\to\Aut\,\BRD(G)$  defined by the $k_0$-form $G_0$ of $G$;
    \item[(ii)] $\charr(k_0)=0$;
	\item[(iii)] $k_0$ is a large field;
	\item[(iv)] $\sN_G(\sN_G(H))=\sN_G(H)$;
    \item[(v)]  $G_0$ is quasi-split.

\end{enumerate}
Then $H$ is conjugate to a subgroup defined over $k_0$,
i.e. there exists a $k_0$-subgroup $H_0\subset G_0$ such that $(H_0)_k$ is conjugate to $H$.
\end{theorem}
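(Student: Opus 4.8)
The plan is to reduce the problem to the normalizer $N:=\sN_G(H)$, equip the spherical homogeneous space $G/N$ with a $G_0$-equivariant $k_0$-form, find a $k_0$-point on that form, and then descend from $N$ back to $H$. Write $\sigma=(\sigma_\gamma)_{\gamma\in\Gamma}$ for the semilinear $\Gamma$-action on $G$ coming from the $k_0$-form $G_0$; recall from \Cref{ss:star-action} that each $\sigma_\gamma$ agrees, up to an inner automorphism of $G$, with a Borel-pair-preserving semilinear automorphism inducing $\ega$ on $\BRD(G)$.

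\emph{Reduction to $N$ and a twisted descent datum.} By \cite{BG} the combinatorial invariants of $G/N$ are determined by those of $G/H$ through a recipe equivariant under $\Aut\BRD(G)$, so hypothesis (i) guarantees that $\ega$ also preserves the combinatorial invariants of $G/N$. Since $\charr k_0=0$, the uniqueness part of Losev's classification of spherical subgroups applies: the combinatorial invariants determine a spherical subgroup up to $G(k)$-conjugacy. Applying this to $\sigma_\gamma(N)$ (whose invariants, relative to a $\sigma_\gamma$-adapted Borel pair, are $\ega$ of those of $N$), we obtain $c_\gamma\in G(k)$ with $\sigma_\gamma(N)=c_\gamma N c_\gamma^{-1}$, and likewise each $\sigma_\gamma(H)$ is $G(k)$-conjugate to $H$. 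By hypothesis (iv) we have $\sN_G(N)=N$, so $c_\gamma$ is unique modulo $N(k)$; comparing $\sigma_{\gamma\delta}(N)$ with $\sigma_\gamma\sigma_\delta(N)$ then yields $c_{\gamma\delta}\equiv\sigma_\gamma(c_\delta)c_\gamma\pmod{N(k)}$, and one chooses the $c_\gamma$ coherently. Now $\tilde\sigma_\gamma(gN):=\sigma_\gamma(g)c_\gamma N$ is a well-defined $\gamma$-semilinear map $G/N\to G/N$ — here self-normality of $N$ is exactly what makes it well-defined, since $\sigma_\gamma$ itself does not preserve $N$ — the displayed relation makes $\tilde\sigma_\bullet$ a genuine semilinear $\Gamma$-action, and it is $G$-equivariant in the appropriate sense; hence by Galois descent $G/N$ descends to a $G_0$-equivariant $k_0$-form $Z_0$. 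Note $\Aut^G(G/N)=\sN_G(N)/N$ is trivial by (iv), so $Z_0$ is the only such form.

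\emph{The crux: a $k_0$-point on $Z_0$.} It remains to show $Z_0(k_0)\neq\varnothing$, for a $k_0$-point yields a $\sigma$-stable conjugate $N'$ of $N$, hence a $k_0$-subgroup $N_0\subset G_0$ with $(N_0)_k=N'$. This is where hypotheses (iii) and (v) are needed and is the technical heart of the theorem; when $N$ is a Borel subgroup the statement ``$Z_0$ has a $k_0$-point'' is exactly ``$G_0/B_0$ has a $k_0$-point'', i.e. the definition of quasi-split, and the general case must extend this. Using (v), fix a Borel pair $T_0\subset B_0$ of $G_0$ over $k_0$. As $G/N$ is spherical, $(B_0)_k$ has a unique open orbit $O$ in $(Z_0)_k$; by uniqueness $O$ is $\tilde\sigma$-stable and so descends to an open $k_0$-subvariety $O_0\subset Z_0$, a homogeneous space of the connected solvable group $B_0$. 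The generic stabilizer $S$ of this orbit has a parabolic-type unipotent radical determined by the colors of $G/N$, while $S/R_u(S)$ is a subtorus of $T$ whose character group is controlled by the weight lattice $\sX$; since the colors and $\sX$ are part of the combinatorial invariants, hence $\Gamma$-stable by (i), and $T,B$ are defined over $k_0$, this identifies $S$ — after correcting by a coboundary in the solvable group $B_0$, which is where largeness (iii) is invoked, through \cite[Proposition 2.6]{Pop}, to kill the remaining Springer-type obstruction — with a conjugate of a $k_0$-subgroup $S_0\subset B_0$. Then $O_0\cong B_0/S_0$ has its base point as a $k_0$-point, and largeness further makes $Z_0(k_0)$ dense. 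I expect that converting ``$\Gamma$ preserves the combinatorial invariants'' together with ``$G_0$ quasi-split'' into the triviality of this obstruction, using the fine structure of the open Borel orbit of a spherical variety, is the main difficulty of the proof.

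\emph{Descent from $N$ to $H$.} Choose $z_0\in Z_0(k_0)$, corresponding to a coset $xN$ with $\sigma_\gamma(x)c_\gamma\in xN(k)$ for all $\gamma$; then $N':=xNx^{-1}$ is $\sigma$-stable and $H':=xHx^{-1}$ satisfies $\sN_G(H')=N'$ and has the same combinatorial invariants as $H$. For each $\gamma$, the subgroup $\sigma_\gamma(H')$ is $G(k)$-conjugate to $H'$ and is normal in $\sigma_\gamma(N')=N'$; but if $gH'g^{-1}\triangleleft N'$ then $N'\subseteq\sN_G(gH'g^{-1})=gN'g^{-1}$, whence $g^{-1}N'g\subseteq N'$, and the resulting descending chain $N'\supseteq g^{-1}N'g\supseteq g^{-2}N'g^2\supseteq\cdots$ of closed subgroups of $G$ stabilizes, forcing $g^{-1}N'g=N'$, i.e. $g\in\sN_G(N')=N'$ and hence $gH'g^{-1}=H'$. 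Therefore $\sigma_\gamma(H')=H'$ for all $\gamma$, so $H'$ is $\sigma$-stable and descends to a $k_0$-subgroup $H_0\subset N_0\subset G_0$ with $(H_0)_k=H'$ conjugate to $H$, which is the assertion of the theorem.
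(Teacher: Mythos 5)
Your overall architecture (pass to $N=\sN_G(H)$, build the $G_0$-equivariant $k_0$-form of $G/N$, find a $k_0$-point, conjugate $H$ by it and descend) is the same as the paper's, and your first and last steps are essentially sound: the reduction to $N$ reproves \Cref{t:Bo}, and your final descent argument (a conjugate of $N'$ containing $N'$ must equal it, so the conjugating element lies in $\sN_G(N')=N'$ and hence normalizes $H'$) is a valid variant of the paper's computation with the $\Gamma$-fixed coset $gN$. But the middle step --- producing a $k_0$-point on the form $Z_0$ of $G/N$ --- is exactly where the theorem's content lies, and your sketch of it does not work. The paper's mechanism is entirely different: since $\sN_G(N)=N$, the space $G/N$ admits a \emph{wonderful embedding} $G/N\into X$ (Knop); the $k_0$-form extends to $X$ because the colored fan of a wonderful embedding is the valuation cone with no colors, hence $\Gamma$-stable (Huruguen); the unique closed orbit of $X$ is a flag variety $G/P$, which acquires a $\Gamma$-fixed point precisely because $G_0$ is quasi-split (the $*$-action permutes standard parabolics, and $\sigma_\gamma(P_I)=P_{\ve_\gamma(I)}$ must be conjugate to $P_I$, forcing $I$ to be $\Gamma$-stable); and then largeness propagates rational points up the chain of smooth orbit closures to the open orbit. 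Your proposal never compactifies, and without the closed orbit there is no starting rational point for largeness to densify.

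Your attempted substitute via the open $B_0$-orbit $O_0\subset Z_0$ is not a proof and, as stated, rests on a misuse of hypothesis (iii): largeness gives Zariski-density of $k_0$-points on a smooth variety \emph{that already has} a $k_0$-point; it does not ``kill a Springer-type obstruction'' or trivialize torsors under solvable groups. Indeed $\R$ is large but $H^1(\R,T)$ is nontrivial for many $\R$-tori, so a $k_0$-form of $B/S$ over a large field can perfectly well be a nontrivial torsor-like twist with no rational point; knowing the combinatorial data of $S$ ($\Gamma$-stably) identifies $S$ up to conjugacy but does not identify $O_0$ with $B_0/S_0$ over $k_0$. So the sentence ``Then $O_0\cong B_0/S_0$ has its base point as a $k_0$-point'' is precisely the unproven claim, and the obstruction you wave away is the actual difficulty. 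To repair the argument you would need to reintroduce the wonderful compactification (or some equivalent device that produces an initial rational point, such as the $\Gamma$-stable standard parabolic on the closed orbit) before invoking largeness.
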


The proof follows  ideas of Akhiezer and Cupit-Foutou \cite{ACF}.

\begin{remark}\label{c:form}
Under the assumptions of \Cref{t:main-qs'},  $Y=G/H$ admits a $G_0$-equivariant $k_0$-form $Y_0$.
 Indeed, we may take $Y_0:=G_0/H_0$.
 \end{remark}

\begin{remark}
The normalizer of a spherical subgroup in general is not self-normalizing; see Avdeev \cite[Example 4, Section 4]{Av13}.
In \cite{BG2} Borovoi and Gargliardi prove that conditions (iii) and (iv) in \Cref{t:main-qs'} are in fact unnecessary, but their proof is much more involved.
\end{remark}

 \begin{remark}\label{r:Ronan}
 In \cite[the proof of Corollary 2.7]{MJT2} Moser-Jauslin and Terpereau show that condition (iv) above is satisfied if $H\subseteq G$ is \emph{symmetric} (i.e., if there exists a nontrivial involution $\theta\in \Aut_k(G)$ such that $G^\theta \subseteq H \subseteq \sN_G(G^\theta)$). They state and prove this over $\mathbb{C}$, but the proof works over an arbitrary algebraically closed field of characteristic zero.

 They deduce, using \Cref{t:main-qs'} (which they restate and reprove), that if $H\subseteq G$ is a symmetric subgroup of a connected reductive complex group $G$, and $G_0$ is a quasi-split $\mathbb{R}$-form of $G$ (with corresponding anti-holomorphic involution $\sigma$), such that $\sigma(H)$ is conjugate to $H$, then there exists a $k$-subgroup $H'\subseteq G$ which is conjugate to $H$, such that $\sigma(H')=H'$. Again, they state and prove this over $\mathbb{R}$, but the proof generalizes easily to an arbitrary large field of characteristic zero.
 \end{remark}

See \Cref{s:ssubgroup} for some examples where \Cref{t:main-qs'} applies.

Note that assumption (i) is a necessary condition for $G/H$ to have a $k_0$-form; see Borovoi \cite[Proposition 8.12]{BG}.

If assumptions (i)-(iv) of \Cref{t:main-qs'} are satisfied, but $G_0$ is not assumed to be quasi-split,
we can give a necessary and sufficient Galois-cohomological condition
for the existence of a $G_0$-equivariant $k_0$-form of $G/H$.

\begin{theorem}[\Cref{t:main-general}]\label{t:main-general'}
Let $G$ be a reductive group over an algebraically closed field $k$.
Let $H\subset G$ be a spherical subgroup.
Let $k_0\subset k$ be a subfield such that $k$ is an algebraic closure of $k_0$.
Let $G_0$ be a $k_0$-form of $G$.
Assume that:
\begin{enumerate}
	\item[(i)]  $\Gamma:=\Gal(k/k_0)$ preserves the combinatorial invariants of $G/H$ when acting on $\BRD(G)$ via the homomorphism
$\ve\colon\Gamma\to\Aut\,\BRD(G)$  defined by the $k_0$-form $G_0$ of $G$;
    \item[(ii)] $\charr(k_0)=0$;
	\item[(iii)] $k_0$ is a large field;
	\item[(iv)] $\sN_G(\sN_G(H))=\sN_G(H)$.
\end{enumerate}
Then $Y=G/H$ admits a $G_0$-equivariant $k_0$-form if and only if
the Tits class $t(\Gtil_0)\in H^2(k_0,Z(\Gtil_0))$ has trivial image in $H^2(k_0,A_\qs)$ (see \Cref{ss:Tits} ).
\end{theorem}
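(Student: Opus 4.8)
The plan is to reduce \Cref{t:main-general'} to the quasi-split case already established in \Cref{t:main-qs'} by a twisting argument. Let $G_{\qs}$ denote the quasi-split inner form of $G_0$, equipped with the same $*$-action $\ve\colon\Gamma\to\Aut\,\BRD(G)$ as $G_0$; since assumption (i) involves only the $*$-action, it holds for $G_{\qs}$ as well, so \Cref{t:main-qs'} applies to $G_{\qs}$ and produces a $k_0$-subgroup $H_{\qs}\subset G_{\qs}$ with $(H_{\qs})_k$ conjugate to $H$, hence a $G_{\qs}$-equivariant $k_0$-form $Y_{\qs}=G_{\qs}/H_{\qs}$ of $Y$. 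The question is then: given that $Y$ already has a form under the quasi-split inner form, when does it have a form under the given inner form $G_0$? Inner forms of $G_{\qs}$ are classified by $H^1(k_0,G_{\qs}^{\ad})$, and $G_0$ corresponds to a class coming from the Tits class $t(\Gtil_0)\in H^2(k_0,Z(\Gtil_0))$ via the connecting map; the obstruction to transporting the $k_0$-structure on $Y_{\qs}$ along the twist should be precisely the image of this class in $H^2(k_0,A_{\qs})$, where $A_{\qs}$ is (as defined in \Cref{ss:Tits}) the relevant quotient of $Z(\Gtil_0)$ built from $\sN_G(H)$ — concretely, the kernel of $Z(\Gtil_{\qs})\to Z(\Gtil_{\qs})/(\text{image in }\sN_G(H)/H$ or an analogous automorphism group).

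More precisely, I would argue as follows. The group of $G$-equivariant automorphisms of $Y=G/H$ is $\sN_G(H)/H$, and a $G$-equivariant $k_0$-form of $Y$ compatible with a fixed $k_0$-form $G_\bullet$ of $G$ exists if and only if a certain cohomological obstruction in $H^2$ with coefficients in the (abelian, by condition (iv) — here self-normalizing normalizer is used to control $\sN_G(\sN_G(H))/\sN_G(H)$ and to make the relevant band abelian or at least manageable) coefficient group vanishes; this is the Galois-cohomological formalism of forms of homogeneous spaces à la Borovoi. Since we already know a form exists for $G_{\qs}$, the obstruction class for $G_{\qs}$ is zero, and the obstruction for $G_0$ differs from it by the image, under the natural map induced by $Z(\Gtil_0)\to A_{\qs}$, of the class measuring the difference between $G_0$ and $G_{\qs}$ as inner forms — and that difference class is exactly the Tits class $t(\Gtil_0)$. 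Hence $Y$ admits a $G_0$-equivariant $k_0$-form iff the image of $t(\Gtil_0)$ in $H^2(k_0,A_{\qs})$ is trivial. The "only if" direction comes from running the same obstruction calculus in reverse: a $G_0$-form gives a vanishing obstruction for $G_0$, which forces the image of $t(\Gtil_0)$ to be trivial since the $G_{\qs}$ obstruction already vanishes.

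The two technical points I would need to nail down are, first, the exact identification in \Cref{ss:Tits} of the group $A_{\qs}$ and the map $Z(\Gtil_0)\to A_{\qs}$ — this must be set up so that the image of the Tits class literally is the obstruction to descending the $\sN_G(H)/H$-torsor (or rather the $Y$-form) along the inner twist — and second, the verification that passing from $G$-varieties to $G_0$-varieties under an inner twist changes the relevant $H^2$-obstruction precisely by cup product / translation by the image of the inner-twisting cocycle. The self-normalizing hypothesis (iv), as in \Cref{t:main-qs'}, together with largeness (iii) and $\charr 0$ (ii), is what guarantees both that the quasi-split case goes through and that the bands/coefficient groups in the obstruction theory are the expected central subgroups of $\Gtil_0$ rather than something larger; I expect assembling these inputs into a clean exact-sequence diagram is routine once the definitions of \Cref{ss:Tits} are in hand.

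The main obstacle I anticipate is bookkeeping of basepoints and the precise comparison of the two obstruction classes: one must choose the form $Y_{\qs}$ for $G_{\qs}$ and an inner-twisting cocycle $z\in Z^1(k_0,G_{\qs}^{\ad})$ with ${}_zG_{\qs}\cong G_0$, then show that the $G_0$-form obstruction equals the image of $[z]$ (equivalently $t(\Gtil_0)$) in $H^2(k_0,A_{\qs})$ independently of these choices — i.e., that the ambiguity in $z$ lands in the kernel of the relevant map. This is a standard but delicate twisting-and-connecting-map computation in non-abelian Galois cohomology, and getting the maps and the direction of the connecting homomorphisms exactly right is where the real work lies; everything else reduces to citing \Cref{t:main-qs'} and the setup of \Cref{ss:Tits}.
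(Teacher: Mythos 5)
Your proposal follows essentially the same route as the paper: apply the quasi-split case (\Cref{t:main-qs}) to the quasi-split inner form $G_\qs$ (legitimate because assumption (i) depends only on the shared $*$-action) to obtain a $G_\qs$-equivariant $k_0$-form $G_\qs/H_\qs$ of $Y$, and then identify the obstruction to carrying this form across the inner twist $G_0={}_c(G_\qs)$ with the image of the Tits class $t(\Gtil_0)$ in $H^2(k_0,A_\qs)$. The paper disposes of that second step entirely by citing Borovoi--Gagliardi \cite[Theorem 3.8]{BG2}, which is precisely the twisting computation you propose to carry out by hand; note only that $A_\qs$ is a $k_0$-form of $A=\sN_G(H)/H$ equipped with a homomorphism from $Z(\Gtil_0)$ (see \Cref{ss:Aqs}), not a quotient of $Z(\Gtil_0)$, and that $A$ is abelian by Losev's result rather than by hypothesis (iv), which is needed instead to make the quasi-split case go through.
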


We explain the notation.
Let $\Gtil_0$ denote the universal covering of the commutator subgroup $[G_0,G_0]$ of $G_0$.
We write $\Ztil_0$ for the center of the simply connected group $\Gtil_0$, which is a finite abelian $k_0$-group.
Write $A=\sN_G(H)/H$, which is an abelian group of multiplicative type (i.e., it is a closed subgroup of a torus);
see Losev \cite[Theorem 2 and Definition 4.1.1(1)]{Losev}.
Then we have a canonical isomorphism $A(k)\isoto\Aut^G(Y)$; see e.g. Borovoi \cite[Lemma 5.1]{BG}.
We have a canonical homomorphism of abelian $k$-groups
\[ \Ztil\to Z(G)\into\sN_G(H)\to A,\]
where $\Ztil$ is the center of  $\Gtil$.
Using the $*$-action $\ve\colon\Gamma\to\Aut\,\BRD(G,T,B)$, we define a $k_0$-form $A_\qs$ of $A$ and a $k_0$-homomorphism
\[\vktil\colon \Ztil_0\to A_\qs\hs.\]
See \Cref{ss:Aqs} below.
This homomorphism induces a homomorphism on the second cohomology
\[\vktil_*\colon H^2(k_0,\Ztil_0)\to H^2(k_0,A_\qs).\]
The $k_0$-form $\Ztil_0$ of $\Ztil$, the $k_0$-form $A_{qs}$ of $A$, the $k_0$-homomorphism $\vktil:\Ztil_0\to A_{qs}$, and the induced homomorphism on cohomology $\vktil_*$, can be constructed in terms of the homomorphism $\ve$; see Borovoi and Gagliardi \cite[Section 3]{BG2}.
Now, the $k_0$-form $\Gtil_0$ of $\Gtil$ defines the \emph{Tits class} $t(\Gtil_0)\in H^2(k_0,\Ztil_0)$; see \Cref{ss:Tits} below.
\Cref{t:main-general'} says that $G/H$ admits a $G_0$-equivariant $k_0$-form
if and only if the class $\vktil_*(t(\Gtil_0))\in H^2(k_0,A_\qs)$ is  trivial.

Note that if $G_0$ is quasi-split, then $t(\Gtil_0)=1$ and hence $\vktil_*(t(\Gtil_0))=1$.
We see that \Cref{t:main-general'} generalizes the assertion of \Cref{c:form}.
Our proof of \Cref{t:main-general'} uses  \Cref{t:main-qs'}, see \Cref{t:main-general} below.

For examples where \Cref{t:main-general'} applies, see Borovoi and Gagliardi \cite{BG2}.

The plan for the rest of the paper is as follows. In \Cref{s:pre} we recall basic definitions and results related to forms and semilinear actions, and the $*$-action on the based root datum of a reductive group induced from a $k_0$-form.
In \Cref{s:spherical vars} we recall basic definitions and facts related to the combinatorial invariants of spherical varieties.
In \Cref{s:existence} we show that if $\sN_G(H)=H$ and $\ega$ preserves the combinatorial invariants of $G/H$ then the (unique) wonderful embedding $\iota \colon G/H \into W$ admits a $G_0$-equivariant $k_0$-form.
In \Cref{s:rational} we show that if $W_0$ is a $G_0$-equivariant $k_0$-form of a wonderful $G$-variety $W$,
and the field $k_0$ is large, then the open orbit $G/H\subset W$ admits a $k_0$-rational point.
In \Cref{s:ssubgroup} we combine these results to prove \Cref{t:main-qs'}, and give some examples.
 In \Cref{s:final} we deduce \Cref{t:main-general'} from \Cref{t:main-qs'} and Borovoi and Gagliardi \cite[Theorem 1.6]{BG2}.

\begin{remark}
Since the original writing of this text, this work was used by Moser-Jauslin and Terpereau in \cite{MJT2}, and generalized by Borovoi and Gagliardi in \cite{BG2}.
\end{remark}

{\textsc {Acknowledgements.}}
The author is grateful to Mikhail Borovoi and Giuliano Gagliardi for stimulating discussions.

\noindent
{\bf Notation and assumptions.}\\
$k$ is an algebraically closed field.\\
$k_0$ is a subfield of $k$ such that $k$ is a Galois extension of $k_0$, hence $k_0$ is perfect. We denote the Galois group $\Gal(k/k_0)$ by $\Gamma$.\\
A {\em $k$-variety} is a reduced separated scheme of finite type over $k$, not necessarily irreducible.\\
An {\em algebraic $k$-group} is a smooth $k$-group scheme of finite type over $k$, not necessarily connected.
All {algebraic $k$-subgroups} are assumed to be smooth.\\
All homogeneous spaces are assumed to be spherical.\\
$G$ is a connected reductive algebraic $k$-group and $G_0$ is a $k_0$-form of $G$.\\
$H\subset G$ is a spherical $k$-subgroup.\\
For a field $k$, we write $\times_k$ for $\times_{\Spec k}$. For a field extension $k_1/k_0$, we write $\times_{k_0} k_1$ for $\times_{\Spec k_0} \Spec k_1$.

\section{Preliminaries}
\label{s:pre}
\begin{subsec}\label{ss:semilinear}
Let $k_0$ be a field of characteristic zero with a fixed algebraic closure $k$.
By a $k_0$-form of a $k$-scheme $X$ we mean a $k_0$-scheme $X_0$ together with an isomorphism of $k$-schemes
\[\vk_X \colon X_0\times_{k_0}k \isoto X.\]
Two $k_0$-forms $(X_0,\vk_X),(X_0^{'},\vk_{X'})$ of $X$ are said to be isomorphic if there exists an isomorphism of $k_0$-schemes $f\colon X_0\isoto X_0^{'}$.

We write $\Gamma=\Gal(k/k_0)$. For $\gamma \in \Gamma$, denote by $\gamma^{*} \colon \Spec k \to \Spec k$ the morphism of schemes induced by $\gamma$. Notice that $(\gamma \circ  \delta)^{*}=\delta^{*} \circ \gamma^{*}$.\\
Let $(X,p_X \colon X\to \Spec k)$ be a $k$-scheme. For every $\gamma \in \Gamma$ we define the $\gamma$-conjugated scheme $(^{\gamma}X,^{\gamma}p_X)$ to be the $k$-scheme which is the same abstract scheme as $X$, but with the structural morphism $^{\gamma}p_X\colon=(\gamma^{*})^{-1}\circ p_X\colon X \to \Spec k$. It is an elementary fact that $(^{\gamma}X,^{\gamma}p_X)$ realizes the pull-back of $X\xrightarrow{p_X} \Spec k$ along $\gamma^{*}\colon \Spec k\to \Spec k$.

A $k/k_0$\,-semilinear automorphism of $X$ is a pair $(\gamma,\mu)$ where $\gamma\in \Gamma$ and $\mu\colon X\to X$
is an isomorphism of schemes such that the diagram below commutes:
\begin{equation*}
\xymatrix@R=25pt@C=40pt{
X\ar[r]^\mu \ar[d]_{p_X}          & X\ar[d]^{p_X} \\
\Spec k\ar[r]^-{(\gamma^*)^{-1}}   & \Spec k
}
\end{equation*}
In this case we say also that $\mu$ is a $\gamma$-semilinear automorphism of $X$.
We shorten ``$\gamma$-semilinear automorphism'' to ``$\gamma$-semi-automorphism''.
Note that if $(\gamma,\mu)$ is a semi-automorphism of $X$, then $\mu$ uniquely determines $\gamma$;
see Borovoi \cite[Lemma 1.2]{BG}.

We denote $\SAut(X)$ the group of all $\gamma$-semilinear automorphisms $\mu$ of $X$ where $\gamma$ runs over $\Gamma=\Gal(k/k_0)$.
There is a cannonical bijection (which is set-theoretically the identity) between $\gamma$-semilinear automorphisms of $X$, and $k$-isomorphisms $^{\gamma}X\isoto X$.
By a semilinear action of $\Gamma$ on $X$ we mean a homomorphism of groups
\[\mu \colon \Gamma \rightarrow \SAut(X), \quad \gamma \mapsto \mu_\gamma\]
such that for each $\gamma \in \Gamma$, $\mu_\gamma$ is $\gamma$-semilinear.

If we have a $k_0$-scheme $X_0$, then the formula $\gamma \mapsto \id_{X_0} \times (\gamma^{*})^{-1}$  defines a semilinear action of $\Gamma$ on $X:=X_0\times_{k_0}k$. Thus a $k_0$-form of $X$ induces a semilinear action of $\Gamma$ on $X$. If $X$ is of finite type then two forms $X_0,X_0'$ are isomorphic if and only if they induce the same semilinear action $\Gamma \to \SAut(X)$, up to conjugation by an element of $\Aut(X)$.

If $(G,p_G \colon G \to \Spec k)$ is an algebraic $k$-group, we define a $k_0$-form of $G$ similarly as above, with the additional assumption that the isomorphism $\vk_G$ is an isomorphism of algebraic $k$-groups (rather than just $k$-schemes). An isomorphism of $k_0$-forms of $G$ is defined as above.

For $\gamma \in \Gamma$, the $\gamma$-conjugated scheme $^{\gamma}G$ has a canonical structure of an algebraic $k$-group (since it is the pullback of an algebraic $k$-group along the morphism $\gamma^{*}\colon \Spec k\to \Spec k$).

A $\gamma$-semi-linear automorphism of $G$ is defined as above, with the additional assumption that the $k$-morphism $\mu \colon ^{\gamma}G\to G$ is a morphism of algebraic $k$-groups.

We denote $\SAut(G)$ the group of all $\gamma$-semilinear automorphisms $\tau$ of $G$ where $\gamma$ runs over $\Gamma=\Gal(k/k_0)$.
By a semilinear action of $\Gamma$ on $G$ we mean a homomorphism
\[\sigma \colon \Gamma \to \SAut(G), \quad \gamma \mapsto \siga\]
such that for all $\gamma \in \Gamma$, $\siga$ is $\gamma$-semilinear.
As above, a $k_0$-form $G_0$ of $G$ induces a semilinear action of $\Gamma$ on $G$. Two $k_0$-forms of $G$ are isomorphic if and only if they induce the same semilinear action of $\Gamma$ on $G$, up to conjugation by an element of $\Aut(G)$.

Let $G$ be an algebraic group over $k$ and let $X$ be a $G$-$k$-variety.
Let $G_0$ be a $k_0$-form of $G$. It gives rise to a semilinear action $\sigma \colon \Gamma \rightarrow \SAut(G),\gamma \mapsto \siga$. Let $X_0$ be a $G_0$-equivariant $k_0$-form of $X$, i.e. a $k_0$-form $X_0$ of $X$ together with a morphism $G_0 \times_{k_0} X_0 \to X_0$ compatible with the action $G\times_k X\to X$.
It gives rise to a semilinear action $\mu \colon \Gamma \to \SAut(X)$ such that for all $\gamma$ in $\Gamma$:
\[\mu_\gamma(g \cdot x)=\sigma_\gamma(g) \cdot \mu_\gamma(x) \ \forall x\in X(k),g\in G(k).\]
We say then that $\muga$ is $\siga$-equivariant.

\end{subsec}

\begin{subsec}\label{ss:inner}
Let $k$ be an algebraically closed field, and
let $G$ be a linear algebraic group over $k$.
We denote by $\Aut(G)$ the group of $k$-automorphisms of $G$, regarded as an abstract group.
For $g\in G(k)$, we denote conjugation by $g$ by $i_g\in\Aut(G)$, and we denote
$\Inn(G)=\{i_g\ |\ g\in G(k)\}$.

Let $k_0\subset k$ be a subfield such that $k$ is a Galois extension of $k$.
We write $\Gamma=\Gal(k/k_0)$.
Let $G_0$ be a $k_0$-form of $G$, it defines a semilinear action
\[ \sigma\colon \Gamma\to \SAut(G).\]
This action induces an action of $\Gamma$ on the abstract group $\Aut(G)$ by
\[(^\gamma a)(g)=\sigma_\gamma(a(\sigma_\gamma^{-1}(g)))\quad\text{for }g\in G(k),\ a\in\Aut(G).\]
Recall that a map
\[c\colon \Gamma\to \Aut(G)\]
is called a 1-\emph{cocycle} if the map $c$ is continuous (for the profinite topology on $\Gamma$ and the discrete topology on $\Aut(G)$) and satisfies the following condition:
\begin{equation}\label{e:cocycle}
c_{\gamma\delta}=c_\gamma\cdot \upgam c_\delta\text{ for all }\gamma,\delta\in\Gamma.
\end{equation}
The set of such cocycles is denoted by $Z^1(\Gamma,\Aut(G)\hs)$ or $Z^1(k_0,\Aut(G)\hs)$.
For $c\in Z^1(k_0,\Aut(G)\hs)$, we consider the $c$-twisted semilinear action
\[\sigma'\colon \Gamma\to \SAut(G),\quad \gamma\mapsto c_\gamma\circ \sigma_\gamma.\]
Then, clearly, $\sigma'_\gamma$ is a $\gamma$-semi-automorphism of $G$ for any $\gamma\in\Gamma$.
It follows from the cocycle condition \eqref{e:cocycle} that
\[\sigma'_{\gamma\delta}=\sigma'_\gamma\circ\sigma'_\delta\ \text{ for all }\gamma,\delta\in\Gamma.\]
Since $G$ is an affine variety, the semilinear action $\sigma'$ comes from some $k_0$-form $G_0'$ of $G$,
see Serre \cite[III.1.3, Proposition 5]{Serre}.
We write $G_0'=\hs_c(G_0)$ and say that $G_0'$ is the \emph{twisted form of $G_0$ defined by the cocycle} $c$.

If $c\in Z^1(\Gamma,\Inn(G))$ is a 1-cocycle with coefficients in $\Inn(G)$, then we say that $_c G_0$ is an \emph{inner twist} or \emph{inner form} of $G_0$.

\end{subsec}

\begin{subsec}\label{ss:BRD}
From now on we assume that $G$ is a connected reductive group over an algebraically closed field $k$  of characteristic 0
and that $H\subset G$ is a spherical $k$-subgroup.
Let $B\subset G$ be a Borel subgroup, and let $T\subset B$ be a maximal torus.
We consider the based root datum
\[ \BRD(G)=\BRD(G,T,B)=(X,X^\vee, R,R^\vee, S, S^\vee),\]
where\\
 $X=\X^*(T):=\Hom(T,\G_{m,k})$ is the character group of $T$;\\
 $X^\vee=\X_*(T):=\Hom(\G_{m,k}, T)$ is the cocharacter group of $T$;\\
 $R=R(G,T)\subset X$ is the root system;\\
 $R^\vee\subset X^\vee$ is the coroot system;\\
 $S=S(G,T,B)\subset R$ is the system of simple roots (the basis of $R$) defined by $B$;\\
 $S^\vee\subset R^\vee$ is the system of simple coroots.

 There is a canonical pairing $X\times X^\vee\to \Z,\ (\chi,x)\mapsto \langle \chi,x\rangle$,
 and a canonical bijection $\alpha\mapsto \alpha^\vee\colon R\to R^\vee$
 such that $S^\vee=\{\alpha^\vee\ |\ \alpha\in S\}$.
See Springer \cite[Sections 1 and 2]{Springer} for details.
\end{subsec}

\begin{subsec} \label{ss:star-action}
We write $\SAut_{k/k_0}(G)$ or just $\SAut(G)$ for the group of $k/k_0$\,-semi-automorphism of $G$.
Let $T$ and $B$ be as in \Cref{ss:BRD}.
We construct a canonical homomorphism (cf. Tits \cite[Section 2.3]{Tits})
\begin{equation}\label{e:phi-SAut}
\phi\colon \SAut(G)\to\Aut\,\BRD(G,T,B).
\end{equation}
Let $(\gamma,\tau)$ be a semi-automorphism of $G$.
Consider $\tau(T)\subset \tau(B)\subset G$.
Then $\tau(T)$ is a maximal torus and $\tau(B)$ is a Borel subgroup.
By  Theorem 11.1 and Theorem 10.6(4) in Borel's book \cite{Borel},
there exists an element $g\in G(k)$ such that
\[ g\cdot\tau(T)\cdot g^{-1}=T\quad \text{and}\quad g\cdot \tau(B)\cdot g^{-1}=B.\]
We obtain a $\gamma$-semi-automorphism
$\tau':=\inn(g)\circ\tau$.
The pair $(\gamma,\tau')$ induces an automorphism of the based root datum
\[\phi(\gamma,\tau)\in\Aut\,\BRD(G,T,B)\]
as follows:
Since $\tau'(T)=T$, the pair $(\gamma,\tau')$ acts on the character group $X=\X^*(T)$ by $\chi\mapsto(\gamma,\tau')(\chi)$, where
$$(\gamma,\tau')(\chi)(t)= \hs^\gamma(\chi((\tau')^{-1}(t))),$$
and similarly it acts on the dual lattice $X^\vee=\X_*(T)$ and preserves the pairing $X\times X^\vee\to\Z$.
One can easily check that $(\gamma,\tau')$, when acting on $\Lie(G)$, takes the eigenspace corresponding to a root $\alpha\in R$
to the eigenspace corresponding to $(\gamma,\tau')(\alpha)$, hence $(\gamma,\tau')$ preserves $R$, and, similarly,
it preserves, $R^\vee$ and the bijection $\alpha\mapsto\alpha^\vee\colon R\to R^\vee$.
Since $(\gamma,\tau')$ preserves $B$, it preserves the set of positive roots with respect to $B$, hence, it preserves $S$ and $S^\vee$.
If $g'\in G(k)$ is another element such that
\[ g'\cdot\tau(T)\cdot(g')^{-1}=T\quad \text{and}\quad g'\cdot\tau(B)\cdot (g')^{-1}=B,\]
then $g'=tg$ for some $t\in T(k)$.
Since $\inn(t)$ acts trivially on $\BRD(G,T,B)$,
we see that $\phi(\gamma,\tau)$ does not depend on the choice of $g$.
One checks easily that $\phi\colon \SAut(G)\to\Aut\,\BRD(G,T,B)$ is a homomorphism; see \cite[Proposition 3.1]{BKLR}.

If $G_0$ is a $k_0$-form of $G$ with corresponding semilinear action $\sigma \colon \Gamma \to \SAut(G)$, we denote by
\[\ve = \phi \circ \sigma \colon \Gamma \to \Aut\, \BRD(G,T,B), \quad \gamma \mapsto \ega \]
the induced action of $\Gamma$ on $\BRD(G,T,B)$, and call it ``the $\ast$-action of $\Gamma$ on $\BRD(G,T,B)$ induced by the $k_0$-form $G_0$'' (or simply ``the $\ast$-action'' when there is no risk of confusion).
\end{subsec}

Now let $G$ be a connected reductive group, and let $G_0$ be a $k_0$-form of $G$ defining a semilinear action $\sigma\colon \Gamma\to\SAut(G)$.
Then a subgroup $H\subset G$ is ``defined over $k_0$'' if and only if it is $\Gamma$-invariant.
Recall that a $k_0$-form $G_0$ of $G$ is called \emph{split} if $G_0$ has a split maximal $k_0$-torus.
All split $k_0$-forms of $G$ are isomorphic.
A $k_0$-form $G_0$ of $G$ is called \emph{quasi-split} if $G$ has a Borel subgroup defined over $k_0$.
Every split $k_0$-form of $G$ is quasi-split.

We shall need the following well-known result:

\begin{proposition}\label{p:inn-qs}
Let $G_0$ be a connected reductive group over a field $k_0$.
Then there exist a unique (up to isomorphism)  quasi-split inner form $G_\qs$ of $G_0$. Moreover, $G_0$ and $G_\qs$ induce the same action of $\Gamma$ on $\BRD(G,B,T)$.
\end{proposition}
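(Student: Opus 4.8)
The plan is to construct $G_\qs$ explicitly as an inner twist of $G_0$, prove its quasi-splitness, and establish uniqueness via a Galois-cohomological argument, all of which is classical (Tits, Borel--Tits, Kottwitz). First I would fix a maximal torus $T_0\subset G_0$ and recall that the $*$-action of $\Gamma$ on $\BRD(G)$ factors through $\Aut\,\BRD(G)$; pulling back along the canonical surjection $\Aut\,\BRD(G)\twoheadrightarrow \Aut\,\BRD(G)/\Inn(\text{something})$ — more precisely, using the semidirect decomposition $\Aut(G_{\bar k})=\Inn(G_{\bar k})\rtimes\Aut\,\BRD(G)$ afforded by a pinning — one sees that $\ve\colon\Gamma\to\Aut\,\BRD(G)$ lifts canonically to a homomorphism $\Gamma\to\Aut(G_{\bar k})$ whose image lies in the subgroup of pinned automorphisms. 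This lifted action defines a semilinear action $\sigma_\qs$ on $G$, hence (as $G$ is affine, by Serre \cite[III.1.3, Proposition 5]{Serre}) a $k_0$-form $G_\qs$. By construction $\sigma_\qs$ differs from $\sigma$ by an inner cocycle $c\in Z^1(\Gamma,\Inn(G))$ (both induce the same $*$-action, and two semilinear actions with the same image in $\Aut\,\BRD(G)$ differ by an inner semiautomorphism at each $\gamma$), so $G_\qs$ is an inner form of $G_0$; and since $\sigma_\qs$ preserves a pinning, in particular it preserves a Borel subgroup, so $G_\qs$ is quasi-split.

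Next I would record the second assertion — that $G_0$ and $G_\qs$ induce the same $*$-action — which is immediate from the construction, since $\ve_{G_\qs}=\phi\circ\sigma_\qs=\phi\circ\sigma_{G_0}=\ve_{G_0}$ by the observation that $\phi$ kills $\Inn(G)$ (this is exactly the independence-of-$g$ remark in \Cref{ss:star-action}: $\phi(\gamma,i_g\circ\tau)=\phi(\gamma,\tau)$). Conversely, any quasi-split inner form $G'$ of $G_0$ automatically induces the same $*$-action as $G_0$ for the same reason, so there is no loss in building $G_\qs$ to have a pinned semilinear action.

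For uniqueness up to isomorphism I would argue as follows. If $G'$ and $G''$ are two quasi-split inner forms of $G_0$, then each is an inner twist $_{c'}G_0$, $_{c''}G_0$ by inner cocycles, and $G''$ is an inner twist of $G'$ by the cocycle $c'' (c')^{-1}$ (suitably interpreted after transporting). The set of inner forms of $G_0$ up to isomorphism is $H^1(k_0,G_0^{\ad})$, and the quasi-split inner forms correspond to a single distinguished class: by the classical result of Borel--Tits (see also Kottwitz), a quasi-split group has trivial Tits class and, more to the point, within an inner class the quasi-split form is unique up to isomorphism because its automorphism-functor realization — the fiber of $H^1(k_0,G_0^{\ad})\to H^1(k_0,\Aut\,\BRD)$, which is a single point since quasi-split means the cocycle lands in the class with a pinning — contains exactly one element. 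The cleanest way I would phrase this: a quasi-split form is one whose defining cocycle in $Z^1(\Gamma,\Aut(G))$ is cohomologous to a pinned cocycle, and two pinned cocycles with the same image in $\Aut\,\BRD(G)$ are equal, so there is a unique quasi-split form in each inner class. I would cite Tits \cite{Tits} and Kottwitz for these facts rather than reprove them.

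The main obstacle is the uniqueness statement: existence and the shared $*$-action are essentially formal once the pinning-splitting $\Aut(G)=\Inn(G)\rtimes\Aut\,\BRD(G)$ is invoked, but uniqueness requires knowing that quasi-splitness pins down the cocycle within its inner class, which ultimately rests on the conjugacy of pairs (Borel subgroup, maximal torus) defined over $k_0$ together with the structure of $H^1$ of the relevant automorphism groups. Since this is standard and well-documented, in the paper I expect this proposition is proved in a few lines with references; my plan is to give the pinned-lift construction for existence, the $\phi\circ\Inn=\mathrm{triv}$ remark for the $*$-action, and a citation to Tits/Borel--Tits for uniqueness.
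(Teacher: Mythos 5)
The paper gives no proof of this proposition beyond citing \cite[Proposition (31.5)]{KMRT}, \cite[Proposition 7.2.12]{Conrad} and \cite[Proposition 16.4.9]{Springer2}, and your sketch is precisely the standard argument found in those references: a pinned lift of the $*$-action for existence, triviality of $\phi$ on $\Inn(G)$ for the shared action on $\BRD(G,T,B)$, and rigidity of pinned cocycles for uniqueness. The one step you leave implicit in the uniqueness part is that a quasi-split group admits a \emph{Galois-stable} pinning (a Hilbert~90 argument on the one-dimensional simple root spaces), which is what allows you to replace the defining cocycle by a pinned one within its cohomology class; with that noted, the proposal is correct and consistent with the paper's (cited) proof.
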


\begin{proof}
See The Book of Involutions \cite[Proposition(31.5)]{KMRT}, or Conrad \cite[Proposition 7.2.12]{Conrad},
 the existence only in  Springer \cite[Proposition 16.4.9]{Springer2}.
\end{proof}

\section{Spherical varieties and their combinatorial invariants}\label{s:spherical vars}
From now on all varieties are assumed to be geometrically integral.

A normal $G$-variety $X$ is called \emph{spherical} if $B$ has an open orbit in $X$ for some (equivalently: for every) Borel subgroup $B\subseteq G$. This is equivalent to there being only finitely many $B$-orbits in $X$ for some (equivalently: any) Borel subgroup $B\subseteq G$ (see \Cref{t:emb} below for the nontrivial implication).

$X$ is called a homogeneous spherical variety if $X$ is spherical and $X \cong G/H$ as $G$-varieties for some closed subgroup $H \subseteq G$.

If $X$ is a spherical $G$-variety, then it has an open $G$ orbit, and hence, admits an open embedding $\iota \colon G/H \into X$ where $G/H$ is a homogeneous spherical $G$-variety.

Thus the classification of spherical varieties splits into two parts: \begin{enumerate}
\item[(i)] Given a homogeneous spherical $G$-variety $Y=G/H$, classify all possible open $G$-embeddings $Y\into X$ of $Y$ into a normal $G$-variety $X$.
\item[(ii)] Classify all homogeneous spherical $G$-varieties.
\end{enumerate}

\subsec{Homogeneous spherical varieties and their combinatorial invariants}\label{ss:s-homog}

Let $Y=G/H$ be a spherical homogeneous $G$-variety. We associate to $Y$ the following combinatorial data:

Let $K(Y)$ denote the field of rational functions on $Y$ (notice that $G(k)$ naturally acts on $K(Y)$). For any character $\chi \in X^*(B)$ we define
\[K(Y)^{(B)}_{\chi}=\{f\in K(Y)\  |\  b\cdot f = \chi(b) \cdot f \text{ for all } b\in B(k)\}\]
Because $Y$ is spherical, $\dim_k K(Y)^{(B)}_{\chi}\leq 1$ for all $ \chi \in X^*(B)$. We define the \emph{weight lattice} $\sX$ of $G/H$ by
\[\sX=\sX(Y)=\{\chi \in X^*(B)\: | \:\dim_k K(Y)^{(B)}_{\chi} \neq 0\}\]
This is called the weight lattice of $G/H$. Clearly, $\sX$ is a subgroup of $X^*(B)$. We define $V=V(Y) = \Hom_\Z (\sX, \Q)$.

Let $\Val(K(Y))$ denote the set of $\Q$-valued valuations on $K(Y)$ that are trivial on $k$. We have a natural action of $G(k)$ on $\Val (K(Y))$, and we denote by $\Val^G(K(Y))$ (resp. $\Val^B(K(Y))$) the set of $G$-invariant (resp. $B$-invariant) valuations.

We have a natural map $\rho \colon \Val^B(K(Y)) \rightarrow V$ given by $ v \mapsto (\chi \mapsto v(f_\chi))$. It is known (see Knop \cite[Corollary 1.8]{Knop-LV}) that the restriction of $\rho$ to $\Val^G(K(Y))$ is injective. Denote the image
\[\sV=\sV(Y)=\rho (\Val^G(K(Y)))\subseteq V\]
We call $\sV$ the \emph{valuation cone} of $Y$.

Let $\sD$ denote the set of $B$-invariant prime divisors in $Y$ (these are called ``colors''). This is a finite set (it may be empty). Each element $D \in \sD$ defines a $B$-invariant valuation denoted $\val(D)\in \Val^B(K(Y))$ given by sending a rational function $f\in K(Y)$ to its order of vanishing at $D$. By abuse of notation we write $\rho$ for $\rho \circ \val \colon \sD \rightarrow V$. In general, this map is not injective.

For every $D \in \sD$ denote by $\Stab_G(D)$ the stabilizer of $D\subset Y$ in $G$. Since $D$ is $B$-invariant, we obviously have $B\subseteq \Stab_G(D)$. For every $\alpha \in S$ we denote by $P_\alpha$ the corresponding minimal parabolic subgroup in $G$ containing $B$. Denote by $\sP(S)$ the power set of $S$. Define $\vs \colon \sD \rightarrow \sP (S)$ by letting $\vs (D)$ be the set of all $\alpha \in S$ such that $P_\alpha \not\subseteq \Stab_G(D)$.

\begin{lemma}
Every fiber of the map $\vs \colon \sD \rightarrow \sP (S)$ consists of \ $\leq 2$ elements.
\end{lemma}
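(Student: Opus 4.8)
The plan is to prove the slightly stronger statement that for each \emph{single} simple root $\alpha\in S$ the set $\sD(\alpha):=\{D\in\sD:\alpha\in\vs(D)\}$ has at most two elements. The lemma follows at once: if $\varnothing\neq I\subseteq S$, pick $\alpha\in I$; then $\vs^{-1}(I)\subseteq\sD(\alpha)$. And $\vs^{-1}(\varnothing)=\varnothing$, since $\vs(D)=\varnothing$ would mean $P_\alpha\subseteq\Stab_G(D)$ for every $\alpha\in S$, hence $\Stab_G(D)\supseteq\langle P_\alpha:\alpha\in S\rangle=G$, which is impossible for a proper nonempty closed subset $D$ of the homogeneous space $Y$.

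So fix $\alpha\in S$ and write $P=P_\alpha$. Let $Y^\circ\subseteq Y$ be the open $B$-orbit and put $Y_1:=P\cdot Y^\circ$. Since $P\supseteq B$, $Y_1$ is a single $P$-orbit, and it is open and dense in $Y$ because it contains the dense orbit $Y^\circ$. I claim $D\mapsto D\cap Y_1$ is an injection from $\sD(\alpha)$ into the set of $B$-stable prime divisors of $Y_1$ (a finite set, as these occur among the irreducible components of $Y_1\setminus Y^\circ$). Indeed, if $D\in\sD(\alpha)$ then $P\not\subseteq\Stab_G(D)$, so $P\cdot D\supsetneq D$; being irreducible of dimension $>\dim D=\dim Y-1$, the set $P\cdot D$ is dense in $Y$. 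Hence $D$ is not contained in the proper closed $P$-stable subset $Y\setminus Y_1$, so $D\cap Y_1$ is a nonempty open subset of $D$, i.e. an irreducible $B$-stable closed subset of $Y_1$ of codimension one; and $D=\overline{D\cap Y_1}$ recovers $D$, which gives injectivity.

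It remains to bound the number of $B$-stable prime divisors of $Y_1$ by two. Choosing $y_0\in Y^\circ$ identifies $Y_1$ with $P/H_1$ where $H_1:=\Stab_P(y_0)$, and $B$ has an open orbit on $P/H_1$. Because $P$ has semisimple rank one, $B\backslash P$ is a copy of $\mathbb{P}^1$, and the $B$-orbits on $P/H_1$ correspond bijectively to the $H_1$-orbits on $\mathbb{P}^1=B\backslash P$, with codimensions matching (a routine dimension count using $\dim(P/H_1)-\dim(B\backslash P)=\dim B-\dim H_1$). Thus a $B$-stable prime divisor of $Y_1$ — equivalently, the closure of a codimension-one $B$-orbit — corresponds to a zero-dimensional $H_1$-orbit on $\mathbb{P}^1$, i.e. to a finite subset of the fixed-point locus of the identity component $H_1^\circ$. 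Since $B$, hence $H_1$, hence $H_1^\circ$, acts on $\mathbb{P}^1$ with a dense orbit, $H_1^\circ$ acts nontrivially, so its fixed-point locus in $\mathbb{P}^1$ consists of at most two points; therefore there are at most two such $H_1$-orbits, and at most two $B$-stable prime divisors of $Y_1$. Combined with the previous paragraph, $|\sD(\alpha)|\le 2$.

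I expect the only genuine work to be the bookkeeping in the last paragraph: setting up the $B$-orbit / $H_1$-orbit correspondence between $P/H_1$ and $B\backslash P\cong\mathbb{P}^1$, checking codimensions are preserved, and being mildly careful when $H_1$ is disconnected (a zero-dimensional $H_1$-orbit is a finite set of $H_1^\circ$-fixed points, so the count of such orbits is still at most two). Nothing here is deep — it is the standard reduction of the problem to the semisimple-rank-one situation.
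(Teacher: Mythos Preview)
Your argument is correct and is essentially the standard one: reduce to a single minimal parabolic $P_\alpha$, pass to its open orbit $Y_1\cong P_\alpha/H_1$, and use the double-coset bijection between $B$-orbits on $P_\alpha/H_1$ and $H_1$-orbits on $B\backslash P_\alpha\cong\mathbb{P}^1$ to bound the number of codimension-one $B$-orbits by the number of $H_1^\circ$-fixed points on $\mathbb{P}^1$. The paper itself does not give a proof at all --- it simply refers to \cite[Lemma~7.1]{BG} --- so there is no approach to compare against; what you have written is precisely the kind of argument that reference unpacks.

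One cosmetic point: the phrase ``Since $B$, hence $H_1$, hence $H_1^\circ$, acts on $\mathbb{P}^1$ with a dense orbit'' is awkward, because $B$ does not act on $B\backslash P$. What you mean (and what the codimension-matching you set up proves) is that the open $B$-orbit on $P/H_1$ corresponds to an open $H_1$-orbit on $\mathbb{P}^1$, and then $H_1^\circ$ has an open orbit too since $H_1/H_1^\circ$ is finite. Rephrasing that sentence would remove the only stumbling block for a reader.
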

\begin{proof}
See \cite[Lemma 7.1]{BG}
\end{proof}

\begin{corollary}
Every fiber of the map $\rho \times \vs \colon \sD \rightarrow V \times \sP (S)$ consists of \ $\leq 2$ elements.
\end{corollary}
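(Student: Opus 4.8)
The plan is to deduce this immediately from the preceding Lemma, with essentially no extra work. The key observation is that the map $\rho\times\vs$ refines the map $\vs$: composing $\rho\times\vs\colon\sD\to V\times\sP(S)$ with the projection $\mathrm{pr}_2\colon V\times\sP(S)\to\sP(S)$ recovers $\vs$. Consequently, for every point $(v,I)\in V\times\sP(S)$, the fiber $(\rho\times\vs)^{-1}(v,I)$ is contained in the fiber $\vs^{-1}(I)$.

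Thus I would argue as follows. Fix $(v,I)\in V\times\sP(S)$ and let $D\in\sD$ lie in $(\rho\times\vs)^{-1}(v,I)$; then in particular $\vs(D)=I$, so $D\in\vs^{-1}(I)$. Hence $(\rho\times\vs)^{-1}(v,I)\subseteq\vs^{-1}(I)$, and by the Lemma the latter set has at most two elements, so the former does as well. Since $(v,I)$ was arbitrary, every fiber of $\rho\times\vs$ has at most two elements, as claimed. There is no real obstacle here: the entire content is the Lemma (cited as \cite[Lemma 7.1]{BG}), and the corollary is just the trivial remark that a finer partition of $\sD$ has fibers no larger than those of a coarser one.
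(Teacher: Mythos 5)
Your proposal is correct and is exactly the intended argument: the paper states this corollary without proof precisely because each fiber of $\rho\times\vs$ is contained in a fiber of $\vs$, which by the Lemma has at most two elements. Nothing further is needed.
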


Denote by $\Omega^{(1)}$ (resp. $\Omega^{(2)}$) the subset of $V \times \sP(S)$ consisting of those elements with precisely one (resp. two) preimages under the map $\rho \times \vs$.

\begin{definition}
Let $G$ be a connected reductive group over $k$, $Y$ a spherical homogeneous space for $G$. By the \emph{combinatorial invariants} of $Y$ we will mean:$$\sX\subseteq X^*(B),\quad \sV \subseteq V, \text{ and }  \Omega^{(1)},\Omega^{(2)} \subseteq V \times \sP(S)$$
\end{definition}

The combinatorial invariants of a spherical homogeneous space completely determine it as a $G$-variety:

\begin{theorem}[Losev's Uniqueness Theorem {\cite[Theorem 1]{Losev}}]
Let $G$ be a connected reductive group over $k$, $H_1,H_2$ two spherical subgroups, $Y_1,Y_2$ the corresponding spherical homogeneous spaces. Assume that $\charr k = 0$. Then $H_1$ and $H_2$ are conjugate in $G$ if and only if $$\sX(Y_1)=\sX(Y_2),\quad \sV(Y_1)=\sV(Y_2), \text { and } \Omega^{(1)}(Y_1)=\Omega^{(1)}(Y_2),\  \Omega^{(2)}(Y_1)=\Omega^{(2)}(Y_2)$$
i.e., if and only if they have the same combinatorial invariants.
\end{theorem}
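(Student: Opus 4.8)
The ``only if'' direction is immediate: the field $K(Y)$ with its $G(k)$-action, the lines $K(Y)^{(B)}_\chi$, the set of $\Q$-valued $G$-invariant valuations of $K(Y)$ trivial on $k$, and the set $\sD$ of $B$-stable prime divisors together with their stabilizers are all functorial in the $G$-variety $Y=G/H$, so a $G$-equivariant isomorphism $Y_1\isoto Y_2$ --- which exists precisely when $H_1$ and $H_2$ are conjugate in $G$ --- carries $\bigl(\sX(Y_1),\sV(Y_1),\Omone(Y_1),\Omt(Y_1)\bigr)$ onto $\bigl(\sX(Y_2),\sV(Y_2),\Omone(Y_2),\Omt(Y_2)\bigr)$. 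So the content is the converse, and that is where essentially all the difficulty lies. My plan is to reduce to the case of \emph{wonderful} homogeneous spaces and then to prove that two wonderful $G$-varieties with the same combinatorial data on the open orbit are $G$-isomorphic. As a preliminary step I would replace $G$ by $\Gtil\times Z(G)^\circ$ --- which changes neither the set of spherical homogeneous spaces nor their invariants essentially --- so as to assume $G$ semisimple.

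\emph{The wonderful case.} Suppose first that $\NGH=H$, so that $Y=G/H$ admits a wonderful embedding $\iota\colon Y\into W$, unique up to isomorphism. From $(\sX,\sV,\Omone,\Omt)$ one reads off the combinatorial ``spherical system'' of $W$: the set $S^p\subseteq S$ of simple roots whose minimal parabolic fixes every color (visible in the values of $\vs$ on $\sD$), the \emph{spherical roots} of $W$ (recovered, up to the standard normalization supplied by $\Omone$, from the cone $\sV$), and the colored bookkeeping of $\Omone$ and $\Omt$. The assertion then becomes: \emph{two wonderful $G$-varieties with the same spherical system are $G$-equivariantly isomorphic}, and I would prove this by induction on the rank (the number of spherical roots), the base cases of rank $\le 1$ being covered by the explicit classification of rank-$\le 1$ wonderful varieties. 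For the inductive step: the closed $G$-orbit of a wonderful variety is $G/P$ with $P$ determined by $S^p$, so $W_1$ and $W_2$ have $G$-isomorphic closed orbits; each boundary divisor of a wonderful variety is itself wonderful of strictly smaller rank, with spherical system obtained from the ambient one by deleting a spherical root, so induction supplies compatible $G$-isomorphisms between corresponding boundary divisors of $W_1$ and $W_2$; one then glues these using the local structure theorem --- which exhibits an affine $P$-stable slice $Z\cong\mathbb{A}^r$ whose coordinates are $T$-eigenvectors with weights the spherical roots, meeting each boundary divisor in a coordinate hyperplane --- together with a rigidity argument that degenerates each $W_i$, in a flat family over an affine space, to a wonderful variety determined outright by the combinatorial data, and shows that the finitely many parameters of the family are forced. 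Restricting the resulting isomorphism $W_1\isoto W_2$ to the open orbits gives $G/H_1\cong G/H_2$, so $H_1$ and $H_2$ are conjugate.

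\emph{Reduction of the general case.} For arbitrary spherical $H$ let $\hat H$ be its spherical closure, so that $H\subseteq\hat H\subseteq\NGH$ with $\hat H$ spherically closed; since $\sV(G/\hat H)$ is a simplicial cone (Brion: the valuation cone is a fundamental domain for a finite reflection group), $G/\hat H$ admits a wonderful embedding and the wonderful case applies to it. Two points remain. First, $\hat H$ and the combinatorial invariants of $G/\hat H$ are extracted from those of $G/H$ by Luna's standard recipe (the spherical roots and $S^p$ are the same; the colors and their $\rho$- and $\vs$-data descend), so equal invariants for $G/H_1$ and $G/H_2$ give equal invariants for $G/\hat H_1$ and $G/\hat H_2$, whence $\hat H_1$ and $\hat H_2$ are conjugate --- after conjugating, write $\hat H_1=\hat H_2=:\hat H$. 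Second, $H$ is recovered inside $\hat H$: the group $\hat H/H$ is of multiplicative type (Losev), $\hat H$ acts on each line $K(G/H)^{(B)}_\chi$ through a character $\lambda_\chi\colon\hat H\to\G_m$, this yields a homomorphism $\sX(G/H)\to\X^*(\hat H/H)$ with kernel $\sX(G/\hat H)$, and $H=\bigcap_{\chi\in\sX(G/H)}\ker\lambda_\chi$. Hence $H$ depends only on $\hat H$ and the inclusion $\sX(G/\hat H)\subseteq\sX(G/H)$, and we conclude that $H_1$ and $H_2$ are conjugate in $G$.

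\emph{Main obstacle.} The crux is the isomorphism statement for wonderful varieties with a common spherical system --- a form of the Luna conjecture. It cannot be read off from the Luna--Vust classification of $G$-embeddings, which already presupposes a fixed homogeneous space; it genuinely needs the structural machinery (the local structure theorem, parabolic induction on the rank, the classification in rank $\le 1$, and the rigidity/deformation argument controlling the ``moduli'' of wonderful varieties with a fixed spherical system), and carrying the rigidity step through uniformly across all Dynkin types is where the real work lies --- this is the heart of Losev's paper \cite{Losev}.
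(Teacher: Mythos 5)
First, a point of reference: the paper itself offers no proof of this statement --- it is imported as a black box from Losev \cite{Losev}, so there is no internal argument to compare yours against. Your ``only if'' direction is fine, and the overall architecture (reduce to the spherically closed/self-normalizing case, then invoke a rigidity statement for wonderful varieties) is a reasonable map of the territory. The recovery of $H$ from $\hat H$ via the characters $\lambda_\chi$ is also essentially right, but note that the injectivity of $\NGH/H\to\Hom(\sX,\G_m)$ --- equivalently, that $\X^*(\NGH/H)$ is a quotient of $\sX$ computable from the invariants --- is itself Losev's Theorem~2, a substantive result of the same paper, not a formality; the same goes for the claim that the invariants of $G/\hat H$ ``descend'' combinatorially from those of $G/H$.

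The genuine gap is the step you yourself flag as the crux: ``two wonderful $G$-varieties with the same spherical system are $G$-equivariantly isomorphic.'' As written this is not a proof but a pointer to the uniqueness part of the Luna conjecture, a theorem of at least the same depth as the statement under review. Worse, the logical order in Losev's paper is the reverse of yours: the wonderful rigidity is there deduced \emph{from} Theorem~1, so your reduction is circular unless you supply an independent proof. The known independent routes are the full Luna--Bravi--Pezzini classification program or Cupit-Foutou's invariant-Hilbert-scheme argument, and neither is captured by ``a rigidity argument that shows the finitely many parameters of the family are forced'' --- controlling those parameters uniformly across all types is precisely where all the work lies. Losev's actual proof does not pass through wonderful rigidity: after reducing to $\NGH=H$ he exploits that such an $H$ is the normalizer of its own Lie algebra, works with the Demazure embedding in the Grassmannian of $\gg$, and runs a direct inductive argument, obtaining the wonderful uniqueness afterwards as a corollary. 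So your proposal should be read as a correct statement of what must be proved, with the essential content still missing.
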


Now let $H\subseteq G$ be a spherical subgroup, $G_0$ a $k_0$-form of $G$ with corresponding semilinear action $\sigma \colon \Gamma \rightarrow \SAut(G)$, and $\gamma \in \Gamma$. Denote $Y=G/H$ and $\ega \in \Aut (X^*(B),S)$ the automorphism arising from $\siga$.
\begin{definition}\label{d:preserves}
We say that $\ega$ preserves the combinatorial invariants of $Y$ if
\[
\ega (\sX(Y))=\sX(Y),\quad \ega (\sV (Y))=\sV (Y), \text{ and } \ega(\Omega^{(i)}(Y))=\Omega^{(i)}(Y), i=1,2
\]
Here, once we require $\ega (\sX(Y))=\sX(Y)$, we get induced automorphisms of $V(Y)=\Hom_\Z(\sX(Y),\Q)$ and $V(Y)\times \sP(S)$, which (abusing notations) we also denote by $\ega$.
\end{definition}

\begin{remark}
This condition is automatic when $G_0$ is split, since in this case $\ega$ acts trivially on $(X^*(B),S)$.
\end{remark}

\subsec{Spherical embeddings}

Let $Y=G/H$ be a homogeneous spherical $G$-variety.

A \emph{colored cone} is a pair $(\sC,\sF)$ where $\sF \subseteq \sD$ is a subset and $\sC \subseteq V$ is a strictly convex cone generated by $\rho(\sF)$ and finitely many elements of $\sV$, such that $0\notin \rho(\sF)$.

A face of a colored cone $(\sC,\sF)$ is a colored cone $(\sC_0,\sF_0)$ such that $\sC_0$ is a face of $\sC$ and $\sF_0=\sF\cap \rho^{-1}(\sC_0)$.

A colored fan is a nonempty finite set $\Sigma$ of colored cones satisfying:
\begin{enumerate}
     \item[(i)] For every $(\sC,\sF)\in \Sigma$, $\sC^\circ \cap \sV \neq \emptyset$ (where $\sC^\circ$ denotes the relative interior of $\sC$).
     \item[(ii)] For every face $(\sC_0,\sF_0)$ of a colored cone $(\sC,\sF)\in \Sigma$ satisfying $\sC_0^\circ \cap \sV \neq \emptyset$, $(\sC_0,\sF_0)$ also belongs to $\Sigma$.
     \item[(iii)] For every $u\in \sV$ there is at most one $(\sC,\sF)\in \Sigma$ with $u\in \sC^\circ$.
\end{enumerate}

To any spherical embedding $G/H \into X$ we can associate a colored fan in the following manner:

Denote by $\sA$ the set of $G$-invariant prime divisors in $X$. Then $\Delta = \sD \cup \sA$ is the set of $B$-invariant prime divisors in $X$. The definition of $\rho \colon \sD \to V$ extends to $\rho \colon \Delta \to V$.

For any $G$-orbit $Y\subseteq X$ we denote by $I_Y\subseteq \Delta$ the set of $B$-invariant prime divisors containing $Y$, and we set
\[
(\sC_Y,\sF_Y):=(\text{cone}(\rho(I_Y)),I_Y\cap \sD).
\]

\begin{theorem}[Knop {\cite[Theorem 3.3 and the paragraph before it]{Knop-LV}}\hs]\label{t:emb}
The map
\[
(G/H\into X)\mapsto \Sigma (X):=\{(\sC_Y,\sF_Y):Y\subseteq X \text{ is a }G\text{-orbit}\}
\]
defines a bijection between isomorphism classes of spherical embeddings of $G/H$ and colored fans. Furthermore, the assignment $Y\mapsto (\sC_Y,\sF_Y)$ defines a bijection \[\{G\text{-orbits in }X\}\to \Sigma(X)\] such that for two $G$-orbits $Y,Z\subseteq X$ we have $Y\subseteq \overline{Z}$ if and only if $(\sC_Z,\sF_Z)$ is a face of $(\sC_Y,\sF_Y)$.
\end{theorem}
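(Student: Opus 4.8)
The plan is to reproduce the Luna--Vust classification in Knop's form, in three steps: reduce to \emph{simple} embeddings (those with a unique closed $G$-orbit), classify simple embeddings by colored cones, and then glue, recovering axioms (i)--(iii) as the gluing conditions. Throughout one uses that a Borel orbit is open in $Y$ and Knop's injectivity of $\rho$ on $\Val^G(K(Y))$. For the first step, given a spherical embedding $Y=G/H\into X$ and a $G$-orbit $Z\subseteq X$, set
\[
X_Z=\{\,x\in X\ :\ Z\subseteq\overline{G\cdot x}\,\}.
\]
Using Sumihiro's theorem (a normal $G$-variety is covered by $G$-stable quasi-projective opens) and upper semicontinuity of $\dim\overline{G\cdot x}$, one checks that $X_Z$ is open and $G$-stable, contains the open orbit $Y$, and has $Z$ as its unique closed orbit; since every $\overline{G\cdot x}$ contains a closed orbit, $X=\bigcup_Z X_Z$ (the union over closed orbits suffices). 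Thus every spherical embedding is a union of simple ones, and it is enough to classify simple embeddings and then record how their cones fit together.

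For the second step, let $X$ be a simple embedding with closed orbit $Z$, let $\Delta=\sD\cup\sA$ be the $B$-stable prime divisors of $X$, and let $I_Z\subseteq\Delta$ consist of those containing $Z$. I would first check that $(\sC_Z,\sF_Z)=(\operatorname{cone}(\rho(I_Z)),\,I_Z\cap\sD)$ is a colored cone: $\rho(I_Z\cap\sA)\subseteq\sV$ since a $G$-stable divisor defines a $G$-invariant valuation; $\sC_Z$ is strictly convex and meets $\sV$ in its relative interior because $Z$ is the \emph{unique} closed orbit (a line in $\sC_Z$, or the interior missing $\sV$, would force a second closed orbit or contradict separatedness); and $0\notin\rho(\sF_Z)$ by the local structure theorem for spherical $B$-charts. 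The heart of the matter is reconstruction: $X$ possesses a canonical $B$-stable \emph{affine} open $X_0$, namely the complement of the $B$-stable prime divisors \emph{not} meeting $Z$; it intersects $Z$ in the open $B$-orbit, satisfies $X=G\cdot X_0$, and its coordinate ring is the span of those $B$-eigenfunctions $f_\chi\in K(Y)^{(B)}_{\chi}$ with $\langle\chi,\rho(D)\rangle\ge 0$ for all $D\in I_Z$ — data read off from $(\sC_Z,\sF_Z)$. Conversely, a colored cone $(\sC,\sF)$ yields such a $B$-algebra, hence an affine $B$-chart $X_0$, and the $G$-variety obtained by glueing the $G$-translates of $X_0$ over $Y$ is a simple embedding with the prescribed colored cone; separatedness and the single-closed-orbit property are exactly what strict convexity and $\sC^\circ\cap\sV\ne\varnothing$ secure. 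Applying this to the simple sub-embeddings $X_{Y'}$ attached to the various $G$-orbits $Y'\subseteq X$ shows that $Y'\mapsto(\sC_{Y'},\sF_{Y'})$ is a bijection onto the faces of $(\sC_Z,\sF_Z)$ meeting $\sV$ in their interior, with $Y'\subseteq\overline{Y''}$ iff $(\sC_{Y''},\sF_{Y''})$ is a face of $(\sC_{Y'},\sF_{Y'})$.

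For the third step, a general embedding $X=\bigcup_Z X_Z$ gives the finite set $\Sigma(X)=\{(\sC_Z,\sF_Z)\}$; separatedness of $X$ together with the orbit/face dictionary of the previous step translate precisely into axioms (i)--(iii), with axiom (iii) equivalent to separatedness of the glued scheme. Two embeddings with the same colored fan have, orbit by orbit, the same canonical simple pieces, glued along the same opens, hence are isomorphic; and given a colored fan $\Sigma$, glueing the simple embeddings of its cones along the opens corresponding to common faces produces a separated normal $G$-variety, i.e. a spherical embedding of $Y$ with colored fan $\Sigma$. The main obstacle is the second step — producing the canonical affine $B$-chart and reconstructing its coordinate ring from the colored cone (this rests on the local structure theory of spherical varieties and Knop's analysis of $B$-stable affine opens), and then verifying that the reconstructed $G$-variety is separated; the colored-fan axioms are designed exactly to make this succeed.
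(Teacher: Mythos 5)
The paper offers no proof of this theorem---it simply cites Knop's Luna--Vust paper---and your sketch is a faithful outline of exactly the proof given there: reduction to simple embeddings via the open sets $X_Z$, classification of simple embeddings through the canonical affine $B$-chart reconstructed from the colored cone, and gluing, with axiom (iii) encoding separatedness. One small correction: the canonical chart of a simple embedding is the complement of the $B$-stable prime divisors not \emph{containing} the closed orbit $Z$ (a color can meet $Z$ without containing it), rather than of those not meeting $Z$.
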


\subsec{Wonderful varieties}

The following definition is due to Luna \cite{Luna}.

\begin{definition}\label{d:wonderful}
A $G$-variety $W$ is called \emph{wonderful} if:
\begin{enumerate}
\item[(i)] $W$ is complete and smooth;
\item[(ii)] $W$ admits an open $G$-orbit whose complement consists of a finite union of smooth prime divisors $X_1,...,X_r$ with normal crossings.
\item[(iii)] The $G$-orbit closures of $W$ are given by partial intersections of the $X_i$'s, and they are all nonempty.
\end{enumerate}
\end{definition}

In particular, a wonderful $G$-variety $W$ has a unique closed $G$-orbit, which we denote by $X:=\bigcap_{i=1}^r X_i$.

Every wonderful $G$-variety is spherical; see Luna \cite{Luna}.

As Luna \cite{Luna} noticed, a wonderful variety is always projective (see also Avdeev \cite[Proposition 3.18]{Avdeev}).

A \emph{wonderful embedding} of a spherical homogeneous space $G/H$ is  an open $G$-embedding $\iota \colon G/H \into W$ into a wonderful $G$-variety.

Any spherical variety $G/H$ with $\sN_G(H)=H$  admits a wonderful embedding; see Knop \cite[Corollary 7.2]{Knop}.

Wonderful varieties have a very nice description in terms of colored fans:

\begin{theorem}\label{t:wonderfan}
Under the correspondence between colored fans and spherical embeddings, a wonderful embedding $G/H\into W$ corresponds to the colored fan that has the following two properties:
\begin{enumerate}
     \item[(i)] It has exactly one maximal colored cone, and it equals the valuation cone $\sV(Y)$.
     \item[(ii)] It has no colours.
\end{enumerate}
\end{theorem}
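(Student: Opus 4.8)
The plan is to compute the colored fan $\Sigma(W)$ of the wonderful embedding $\iota\colon G/H\into W$ explicitly and identify it with the set of faces of the colored cone $(\sV,\emptyset)$, where $\sV=\sV(Y)$ and $Y=G/H$. The first ingredient is completeness: a wonderful $G$-variety is complete (indeed projective, as recalled above), and by the Luna--Vust completeness criterion (see \cite{Knop-LV}, and also \cite{Tim}) the colored fan of a complete spherical embedding covers the valuation cone, so $\sV\subseteq\bigcup_{(\sC,\sF)\in\Sigma(W)}\sC$.

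Next I would pin down the shape of $\Sigma(W)$ using the unique closed orbit. Since $W$ is wonderful it has a unique closed $G$-orbit, $Z_0=\bigcap_{i=1}^{r}X_i$. For any $G$-orbit $Y\subseteq W$, the closure $\overline Y$ is a nonempty $G$-stable closed subvariety, hence contains a closed $G$-orbit, which must be $Z_0$; so $Z_0\subseteq\overline Y$ for every $G$-orbit $Y$. By \Cref{t:emb} this says exactly that $(\sC_Y,\sF_Y)$ is a face of $(\sC_{Z_0},\sF_{Z_0})$ for every $Y$; consequently $(\sC_{Z_0},\sF_{Z_0})$ is the unique maximal colored cone of $\Sigma(W)$ and $\Sigma(W)$ is precisely the set of its faces. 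In particular $\bigcup_{(\sC,\sF)\in\Sigma(W)}\sC=\sC_{Z_0}$, and combined with the completeness step this gives $\sV\subseteq\sC_{Z_0}$.

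It remains to show that $\Sigma(W)$ has no colors and that $\sC_{Z_0}=\sV$. First note that the only $G$-invariant prime divisors of $W$ are $X_1,\dots,X_r$: a $G$-invariant prime divisor meets the open orbit $G/H$ in a $G$-invariant, hence empty or full, closed subset, so it is disjoint from $G/H$ and therefore equals one of the $X_i$; thus $\sA=\{X_1,\dots,X_r\}$ and $\Delta=\sD\sqcup\{X_1,\dots,X_r\}$. The crucial geometric fact is that a wonderful variety is toroidal: no $G$-orbit of $W$ is contained in the closure $\overline D$ of a color $D\in\sD$ (\cite{Luna}; see also \cite{Tim}). Granting this, $I_Y\cap\sD=\emptyset$, i.e. $\sF_Y=\emptyset$, for every $G$-orbit $Y$, which is the ``no colors'' assertion. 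For $Y=Z_0$ it moreover gives $I_{Z_0}\subseteq\sA=\{X_1,\dots,X_r\}$, and since each $X_i$ contains $Z_0=\bigcap_j X_j$ we get $I_{Z_0}=\{X_1,\dots,X_r\}$, hence $\sC_{Z_0}=\mathrm{cone}(\rho(X_1),\dots,\rho(X_r))$. Finally each $X_i$ is $G$-invariant, so $\val(X_i)\in\Val^G(K(Y))$ and $\rho(X_i)\in\sV$; since $\sV$ is a convex cone this yields $\sC_{Z_0}\subseteq\sV$, and with $\sV\subseteq\sC_{Z_0}$ from the previous step we conclude $\sC_{Z_0}=\sV$. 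Therefore $\Sigma(W)$ is the set of faces of $(\sV,\emptyset)$ (so in particular $(\sV,\emptyset)$ is a colored cone and its faces form a colored fan, since they come from the genuine embedding $W$): it has a unique maximal colored cone, equal to $\sV$, and no colors, which is exactly the assertion.

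I expect the main obstacle to be the toroidality of wonderful varieties, i.e. the statement that the closure of a color contains no $G$-orbit; everything else is bookkeeping with \Cref{t:emb} and the elementary observation about $G$-invariant divisors. One clean way to obtain toroidality is via Luna's local structure theorem, according to which $W\setminus\bigcup_{D\in\sD}\overline D$ is an affine open subset of $W$, stable under a parabolic $P\supseteq B$ and meeting every $G$-orbit of $W$, so that no $\overline D$ can contain a $G$-orbit.
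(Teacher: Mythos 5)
The paper itself does not prove this statement; it is disposed of by a citation to Perrin. Your write-up is therefore a genuine derivation from the Luna--Vust theory, and most of it is correct: completeness of $W$ gives $\sV\subseteq\bigcup_{(\sC,\sF)\in\Sigma(W)}\sC$ by the Luna--Vust completeness criterion; uniqueness of the closed orbit $Z_0=\bigcap_i X_i$ together with \Cref{t:emb} shows that $(\sC_{Z_0},\sF_{Z_0})$ is the unique maximal colored cone, whence $\sV\subseteq\sC_{Z_0}$; the identification $\sA=\{X_1,\dots,X_r\}$ and the membership $\rho(X_i)\in\sV$ are right; and, granting toroidality, you correctly obtain $\sF_Y=\emptyset$ for every orbit and $\sC_{Z_0}=\mathrm{cone}(\rho(X_1),\dots,\rho(X_r))\subseteq\sV$ (using that $\sV$ is a convex cone, which is Brion's theorem and is implicit in the paper's terminology). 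You have also correctly isolated toroidality of $W$ as the one nontrivial geometric input, and citing Luna or Timashev for that fact is legitimate.

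The gap is in the ``clean way'' you propose for toroidality: it is circular. The assertion that $W\setminus\bigcup_{D\in\sD}\overline D$ meets every $G$-orbit is not a consequence of the general local structure theorem --- it \emph{is} the toroidality of $W$, essentially by definition (an orbit $Y$ misses that open set iff $Y\subseteq\overline D$ for some color $D$, by irreducibility of $Y$). The version of the local structure theorem valid for an arbitrary spherical variety produces, for a fixed orbit $Y$, the affine chart obtained by removing the $B$-stable prime divisors that do \emph{not} contain $Y$ (colors and $G$-stable divisors alike); this chart meets $Y$ by construction but says nothing about whether some color closure contains $Y$. If your version held for all spherical varieties, every spherical variety would be toroidal, which is false (e.g.\ $\mathbb{P}^2\supseteq\SL_2/U$, where the fixed point lies in the closure of a color). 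An actual proof that axioms (i)--(iii) of \Cref{d:wonderful} force toroidality needs more work: for instance, apply the local structure theorem at the closed orbit, use smoothness and Luna's linearization to identify the slice with an $r$-dimensional $L$-module in which the traces of $X_1,\dots,X_r$ are the coordinate hyperplanes, and conclude that no color passes through $Z_0$. So either keep the citation to Luna/Timashev for toroidality and delete the sketch, or replace the sketch by a genuine argument along these lines.
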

\begin{proof}
See Perrin \cite[Corollary 3.4.2, Definition 3.4.8 and Theorem 3.4.16]{Perrin}.
\end{proof}

\subsec{Galois descent for spherical embeddings}

Let $k_0\subseteq k$ be a Galois extension of fields such that $k$ is algebraically closed and of characteristic zero, and let $\Gamma =\Gal(k/k_0)$ the Galois group. Let $G$ be a connected reductive $k$-group, $G_0$ a $k_0$-model of $G$. Let $Y=G/H$ a spherical homogeneous $G$-variety, $Y_0$ a $G_0$-equivariant $k_0$-model of $Y$ with induced semilinear action
\[\Gamma\to\SAut(Y),\: \gamma \mapsto \sigma_{\gamma}^{(Y)}\]

\begin{theorem}
The $G_0$-equivariant $k_0$-form $Y_0$ of $Y$ naturally induces compatible actions of $\Gamma$ on
\[
K(Y),\Val^B(K(Y)),\sX,V,\sV,\sD.
\]
\end{theorem}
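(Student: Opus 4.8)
The plan is to produce each of the six $\Gamma$-actions from the semilinear action $\gamma\mapsto\sigma_\gamma^{(Y)}$ on $Y$, and then check compatibility (i.e. that the various natural maps between these objects are $\Gamma$-equivariant). The starting point is that a $\gamma$-semi-automorphism $\sigma_\gamma^{(Y)}$ of the $k$-scheme $Y$ pulls back rational functions: it induces a $\gamma$-semilinear field automorphism of $K(Y)$, where $\gamma$-semilinear means $\sigma_\gamma^{(Y)}(\lambda f)=\hs^\gamma\lambda\cdot\sigma_\gamma^{(Y)}(f)$ for $\lambda\in k$, $f\in K(Y)$. This is the defining action on $K(Y)$; functoriality of pullback together with the cocycle/homomorphism property of $\gamma\mapsto\sigma_\gamma^{(Y)}$ gives that $\gamma\mapsto(\sigma_\gamma^{(Y)})^*$ is a homomorphism $\Gamma\to\mathrm{SAut}(K(Y))$. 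The key compatibility at this first level is with the $G$-action: since $Y_0$ is a \emph{$G_0$-equivariant} form, $\sigma_\gamma^{(Y)}$ is $\sigma_\gamma$-equivariant (in the sense of \Cref{ss:semilinear}), so the induced action on $K(Y)$ is compatible with the $\sigma_\gamma$-twisted action on $K(Y)$ coming from $B$ via the $*$-action on $\X^*(B)$.

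Next I would push this action forward along each of the five remaining constructions, in order of dependency. (1) \emph{$\Val^B(K(Y))$}: a $\gamma$-semilinear automorphism $\tau$ of $K(Y)$ sends a $\Q$-valued valuation $v$ (trivial on $k$) to $v\circ\tau^{-1}$, which is again a valuation trivial on $k$; one checks that if $v$ is $B$-invariant then $v\circ\tau^{-1}$ is $\sigma_\gamma(B)$-invariant, and since $\sigma_\gamma(B)$ is conjugate to $B$ (as in \Cref{ss:star-action}, after composing with an inner automorphism), the set $\Val^B(K(Y))$ is preserved — here one has to be slightly careful and use that $\Val^B$ can be characterized intrinsically (e.g. via the open $B$-orbit), so the choice of conjugating element does not matter. (2) \emph{$\sX\subseteq\X^*(B)$}: $\sX$ is the set of $B$-eigencharacters occurring in $K(Y)$; since the action on $K(Y)$ is compatible with the $\ega$-action on $\X^*(B)$, we get $\ega(\sX)=\sX$ automatically, giving the $\Gamma$-action on $\sX$; dualizing gives the action on $V=\Hom(\sX,\Q)$. (3) \emph{$\sV\subseteq V$}: this follows from (1) and the fact that $\rho\colon\Val^G(K(Y))\hookrightarrow V$ is injective (Knop) and $G$-equivariant-compatible, so the action on $\Val^G(K(Y))\subseteq\Val^B(K(Y))$ transports to an action on its image $\sV$, compatibly with the action on $V$. (4) \emph{$\sD$}: a semi-automorphism of $Y$ permutes prime divisors and sends $B$-invariant ones to $\sigma_\gamma(B)$-invariant ones, hence — again using the intrinsic characterization of the open $B$-orbit and that colors are the irreducible components of its complement — permutes $\sD$; this action is compatible with $\val\colon\sD\to\Val^B(K(Y))$ and hence with $\rho\colon\sD\to V$.

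The main obstacle, and the step deserving the most care, is the \emph{consistency of the $B$-dependence}: each of $\sX$, $\sV$, $\sD$, and $\Val^B$ is defined using a \emph{fixed} Borel subgroup $B$, but $\sigma_\gamma^{(Y)}$ together with the $G_0$-equivariance only gives compatibility with the \emph{moved} Borel $\sigma_\gamma(B)$, which is merely conjugate to $B$. To make the actions genuinely well-defined one must either (a) show the relevant objects are independent of the choice of Borel (true for $\sX$, $\sV$ as abstract objects, and for $\sD$ as an abstract set with its map to $V$, since $\rho$ and $\vs$ are defined only up to the canonical identifications), or (b) fix once and for all, for each $\gamma$, an element $g_\gamma\in G(k)$ conjugating $\sigma_\gamma(B)$ back to $B$ as in \Cref{ss:star-action}, replace $\sigma_\gamma^{(Y)}$ by $\inn(g_\gamma)\circ\sigma_\gamma^{(Y)}$, and check the resulting action is independent of the choice of $g_\gamma$ (it changes by $T(k)$, which acts trivially on all the combinatorial data) and still a homomorphism up to this ambiguity. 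This is exactly the bookkeeping already carried out for $\BRD(G)$ in \Cref{ss:star-action}, so I would model the argument on that construction and on Borovoi \cite[Section 7]{BG}, reducing everything to functoriality of $K(Y)$ and the intrinsic (Borel-orbit-free) descriptions of $\sX$, $\sV$, $\sD$. The remaining compatibility assertions — that $\rho$, $\val$, $\vs$, and the inclusions $\sV\subseteq V$, $\sX\subseteq\X^*(B)$ intertwine the $\Gamma$-actions — are then formal consequences of naturality.
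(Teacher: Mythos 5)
The paper does not give an argument here at all: its ``proof'' is a one-line citation to Huruguen \cite[Subsection 2.2]{Huruguen} (with the remark that Huruguen's standing assumption of a $k_0$-point is not used). Your sketch is, in substance, the construction carried out in that reference and in Borovoi \cite[Section 8]{BG}: pull the semilinear action back to $K(Y)$, push it through valuations, weights, the valuation cone, and the colors, and check equivariance of $\rho$, $\val$, $\vs$ and the inclusions. You also correctly isolate the one genuinely delicate point --- that $\sigma_\gamma$ only carries $B$ to a conjugate Borel, so one must correct by $\inn(g_\gamma)$ as in \Cref{ss:star-action} --- and your resolution is right: the ambiguity in $g_\gamma$ lies in $T(k)\subset B(k)$, which acts trivially on every piece of $B$-invariant data ($B$-eigencharacters, $B$-invariant valuations, $B$-stable divisors), so the induced actions are well defined and multiplicative. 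So the proposal is correct and matches the approach of the cited source; the only difference from the paper is that you actually supply the argument it outsources.
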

\begin{proof}
See Huruguen \cite[Subsection 2.2]{Huruguen}. (Huruguen assumes that $Y_0$ admits a $k_0$-point, but he does not use this assumption.)
\end{proof}

We call a fan $\Sigma = \{(\sC_i,\sF_i)\}_{i\in I}$ $\Gamma$-stable if for every $(\sC_i,\sF_i)\in \Sigma$ and for every $\gamma \in \Gamma$, the colored cone $(\gamma(\sC_i),\gamma(\sF_i))$ still belongs to $\Sigma$.

Let $\iota \colon Y\into X$ be a spherical embedding. We say that ``the semilinear action of $\Gamma$ extends to $X$'' if there exists a semilinear action
\[\Gamma \to \SAut(X),\:\gamma \mapsto \sigma_{\gamma}^{(X)}\]
such that for every $\gamma \in \Gamma$ one has \[\iota \circ \sigma_{\gamma}^{(Y)}=\sigma_{\gamma}^{(X)}\circ\iota\]

We will need the following result.
\begin{theorem}[Huruguen {\cite[Theorem 2.23]{Huruguen}}\hs]\label{t:Huruguen}
Let $k,k_0,G,G_0,Y,Y_0,X$ be as above. Then the semilinear action of $\Gamma$ extends to $X$ if and only if the colored fan $\Sigma(X)$ corresponding to $X$ is $\Gamma$-stable. Furthermore, in this case, for every $G$-orbit $Y$ in $X$ and every $\gamma \in \Gamma$ one has \[\gamma(\sC_Y)=\sC_{\gamma(Y)},\quad \gamma(\sF_Y)=\sF_{\gamma(Y)}.\]
\end{theorem}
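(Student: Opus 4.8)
This is Huruguen's theorem \cite[Theorem 2.23]{Huruguen}, so one option is simply to cite it; here is how I would prove it. Write $\siga$ for the $\gamma$-semi-automorphisms of $G$ defined by $G_0$ and $\sigma_\gamma^{(Y)}$ for those of $Y$ defined by $Y_0$, and recall that $\sigma_\gamma^{(Y)}$ is $\siga$-equivariant. The plan is to run Galois descent through the Luna--Vust combinatorics: a semilinear automorphism of a $G/H$-embedding $X$ is forced, at the level of colored fans, to permute the colored cones of $\Sigma(X)$ by the $\Gamma$-action on $K(Y)$, $\Val^B(K(Y))$, $\sX$, $V$, $\sV$, $\sD$ recalled above (Huruguen \cite[Subsection 2.2]{Huruguen}); conversely $\Gamma$-stability of $\Sigma(X)$ together with Knop's classification of spherical embeddings (\Cref{t:emb}) produces the semilinear automorphisms of $X$.

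\emph{Necessity, and the ``furthermore''.} Suppose $\gamma\mapsto\sigma_\gamma^{(X)}$ is a semilinear action of $\Gamma$ on $X$ with $\iota\circ\sigma_\gamma^{(Y)}=\sigma_\gamma^{(X)}\circ\iota$. First I would note that $\sigma_\gamma^{(X)}$ is automatically $\siga$-equivariant: the two morphisms $G\times_k X\to X$ defining $\siga$-equivariance agree on the dense open $G\times_k Y$ (where $\sigma_\gamma^{(X)}$ restricts to $\sigma_\gamma^{(Y)}$) and $X$ is separated. Hence $\sigma_\gamma^{(X)}$ is a homeomorphism of $X$ taking $G$-orbits to $G$-orbits and orbit closures to orbit closures, so it induces on the set of $G$-orbits the $\Gamma$-action $Z\mapsto\gamma(Z):=\sigma_\gamma^{(X)}(Z)$; and, after replacing it by $\inn(g_\gamma)\circ\sigma_\gamma^{(X)}$ for $g_\gamma\in G(k)$ with $\inn(g_\gamma)(\siga(B))=B$ (which does not move the orbits), it takes $B$-stable prime divisors to $B$-stable prime divisors and induces on $\Delta=\sD\cup\sA$ the canonical $\Gamma$-action — on $\sD$ by the descent theorem recalled above, on $\sA$ by a direct check — compatibly with $\rho\colon\Delta\to V$ and the $\Gamma$-action on $V$. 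Since $\sigma_\gamma^{(X)}$ carries $\{D\in\Delta: D\supseteq Z\}$ bijectively onto $\{D\in\Delta: D\supseteq\gamma(Z)\}$ we get $\gamma(I_Z)=I_{\gamma(Z)}$, hence
\[
\gamma(\sC_Z)=\text{cone}(\rho(\gamma(I_Z)))=\text{cone}(\rho(I_{\gamma(Z)}))=\sC_{\gamma(Z)},\qquad \gamma(\sF_Z)=\gamma(I_Z\cap\sD)=I_{\gamma(Z)}\cap\sD=\sF_{\gamma(Z)},
\]
which is the last assertion of the theorem and shows $\Sigma(X)=\{(\sC_Z,\sF_Z)\}_Z$ is $\Gamma$-stable.

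\emph{Sufficiency.} Assume $\Sigma(X)$ is $\Gamma$-stable and fix $\gamma\in\Gamma$. I would form the $\gamma$-conjugated scheme ${}^{\gamma}X$ and transport the $G$-action on $X$ along the $k$-group isomorphism ${}^{\gamma}G\isoto G$ induced by $\siga$; then ${}^{\gamma}X$ is a normal spherical $G$-variety and $\sigma_\gamma^{(Y)}$, viewed as a $k$-isomorphism ${}^{\gamma}Y\isoto Y$, is a $G$-equivariant isomorphism of ${}^{\gamma}Y$ onto the open orbit $Y\subseteq X$. Thus $\iota':={}^{\gamma}\iota\circ(\sigma_\gamma^{(Y)})^{-1}\colon Y\into{}^{\gamma}X$ is a $G/H$-embedding, and by the descent theorem recalled above its colored fan is a $\Gamma$-translate of $\Sigma(X)$ (namely $\gamma^{-1}\Sigma(X)$, in the normalization there), hence equals $\Sigma(X)$ by hypothesis. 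By \Cref{t:emb} — where an isomorphism of $G/H$-embeddings is, by definition, compatible with the given identifications of the open orbits with $G/H$ — the embeddings $\iota$ and $\iota'$ are isomorphic, so there is a $G$-isomorphism $f\colon X\isoto{}^{\gamma}X$ with $f\circ\iota=\iota'={}^{\gamma}\iota\circ(\sigma_\gamma^{(Y)})^{-1}$. Then $\sigma_\gamma^{(X)}:=f^{-1}$, regarded as a self-map of the scheme $X$, is a $\gamma$-semilinear automorphism of $X$ with $\sigma_\gamma^{(X)}\circ\iota=\iota\circ\sigma_\gamma^{(Y)}$. To conclude that $\gamma\mapsto\sigma_\gamma^{(X)}$ is a semilinear action, note that for $\gamma,\delta\in\Gamma$ the $\gamma\delta$-semi-automorphisms $\sigma_{\gamma\delta}^{(X)}$ and $\sigma_\gamma^{(X)}\circ\sigma_\delta^{(X)}$ of $X$ restrict on the dense open $Y$ to the same map $\sigma_\gamma^{(Y)}\circ\sigma_\delta^{(Y)}$, and a morphism to the separated scheme $X$ is determined by its restriction to $Y$; so $\sigma_{\gamma\delta}^{(X)}=\sigma_\gamma^{(X)}\circ\sigma_\delta^{(X)}$. (This also makes the extension of $\sigma_\gamma^{(Y)}$ to $X$ unique, whence continuity of $\gamma\mapsto\sigma_\gamma^{(X)}$ follows from that of $\gamma\mapsto\sigma_\gamma^{(Y)}$, everything taking place over a finite subextension $k_1/k_0$.)

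\emph{Main obstacle.} I expect the real work to be the interface with the combinatorial dictionary: in the necessity half, verifying that the inner-corrected $\sigma_\gamma^{(X)}$ induces on $\Delta$ and on the set of orbits exactly the canonical $\Gamma$-actions, compatibly with $\rho$ on all of $\Delta$ (not merely on $\sD$); in the sufficiency half, pinning down the colored fan of the twisted embedding $\iota'$ as the correct $\Gamma$-translate of $\Sigma(X)$. Both amount to the functoriality of the weight lattice, valuation cone and colors under semilinear automorphisms, i.e.\ precisely the content of Huruguen \cite[Subsection 2.2]{Huruguen}; once that is in hand, \Cref{t:emb} and the separatedness of $X$ complete the proof. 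Note that — as the argument shows — at this step one needs neither the largeness of $k_0$ nor any hypothesis on $\sN_G(H)$.
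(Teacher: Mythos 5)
The paper does not prove this statement at all: it is quoted verbatim from Huruguen \cite[Theorem 2.23]{Huruguen} and used as a black box, so there is no in-paper argument to compare against. Your sketch is, in effect, a reconstruction of Huruguen's own proof, and it is correct in outline. The necessity half (separatedness forces $\siga$-equivariance of the extension; correction by $\inn(g_\gamma)$ to make it $B$-compatible; $\gamma(I_Z)=I_{\gamma(Z)}$, hence $\gamma(\sC_Z)=\sC_{\gamma(Z)}$ and $\gamma(\sF_Z)=\sF_{\gamma(Z)}$) and the sufficiency half (twist $X$ to ${}^{\gamma}X$, identify the colored fans using $\Gamma$-stability, invoke \Cref{t:emb} to produce the isomorphism, then get the homomorphism property and continuity by density in the separated $X$ over a finite subextension) are exactly the right moves, and you correctly isolate the genuine content — the compatibility of $\rho$, $\sV$, $\sD$ and the orbit correspondence with the canonical $\Gamma$-actions of Huruguen's Subsection 2.2 — as the part that is being delegated rather than proved. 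One small point worth making explicit in the necessity step: the canonical $\Gamma$-action on $\sD$ and on $V$ is itself defined via a choice of $g_\gamma$ normalizing $B$ against $\siga(B)$, so you should use the same $g_\gamma$ there as in your correction $\inn(g_\gamma)\circ\sigma_\gamma^{(X)}$ (any two choices differ by an element of $\sN_G(B)(k)=B(k)$, which acts trivially on $\sD$ and on $\X^*(B)$, so the actions are well defined and do match — but this deserves a sentence). With that caveat, the proposal is a faithful and correct expansion of the citation the paper relies on.
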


\section{Equivariant forms of wonderful varieties}
\label{s:existence}

\begin{subsec}\label{ss:setup}
The setup for the rest of the paper is as follows: $k_0$ is a field of characteristic zero with a fixed algebraic closure $k$, and $\Gamma=\Gal(k/k_0)$ is the Galois group. We fix a connected reductive $k$-group $G$, a spherical $k$-subgroup $H\subseteq G$, and a $k_0$-form $G_0$ of $G$.
\end{subsec}

The following \Cref{l:Bo} and \Cref{t:Bo} are generalizations
of results of Akhiezer \cite{Akhiezer}, cf. Borovoi \cite{BG}.

\begin{lemma}[{\cite[Proposition 8.12]{BG}}\hs]
\label{l:Bo}
Let $k_0,k,\Gamma,G,H,G_0$ be as in \Cref{ss:setup}, and let $\gamma \in \Gamma$. Then the following are equivalent:\begin{enumerate}
    \item [(i)] $\ve_\gamma$ preserves the combinatorial invariants of $Y=G/H$;
    \item[(ii)] there exists $a_\gamma \in G(k)$ such that $\sigma_\gamma (H) = a_\gamma H {a_\gamma}^{-1}$
\end{enumerate}
\end{lemma}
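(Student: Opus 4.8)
The plan is to establish the equivalence via Losev's Uniqueness Theorem, translating the geometric condition (ii) into the combinatorial condition (i). The key observation is that for $a \in G(k)$, the conjugate subgroup $aHa^{-1}$ defines the spherical homogeneous space $G/aHa^{-1} \cong G/H$ (via right translation by $a$), so conjugate subgroups have the same combinatorial invariants. On the other hand, applying a semi-automorphism $\sigma_\gamma$ to $H$ produces a subgroup $\sigma_\gamma(H)$ whose combinatorial invariants are obtained from those of $H$ by applying $\ega$ — this is precisely how $\ega$ acts on the data attached to $Y$. I would make this last point precise first: the field of rational functions $K(G/\sigma_\gamma(H))$ is identified with $K(G/H)$ via $\sigma_\gamma$, and under this identification the $B$-eigenspace $K(G/H)^{(B)}_\chi$ is carried to $K(G/\sigma_\gamma(H))^{(B)}_{\ega(\chi)}$, because $\sigma_\gamma$ carries $B$ to a Borel subgroup which can be moved back to $B$ by an inner automorphism (and inner automorphisms act trivially on $\BRD$), and the $*$-action $\ega$ is by definition the induced automorphism on $X^*(B)$. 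Consequently $\sX(G/\sigma_\gamma(H)) = \ega(\sX(Y))$, and likewise the semi-automorphism $\sigma_\gamma$ carries $\Val^G$, $\Val^B$, $\sD$, the map $\rho\times\vs$, and hence $\sV$, $\Omega^{(1)}$, $\Omega^{(2)}$, over to the corresponding objects for $Y$ twisted by $\ega$.

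For the implication (ii) $\Rightarrow$ (i): if $\sigma_\gamma(H) = a_\gamma H a_\gamma^{-1}$ for some $a_\gamma \in G(k)$, then the spherical homogeneous space $G/\sigma_\gamma(H)$ is $G$-isomorphic to $G/H$, so by the equivalence above, its combinatorial invariants equal those of $Y$. But by the computation of the previous paragraph, its combinatorial invariants are $\ega(\sX(Y)), \ega(\sV(Y)), \ega(\Omega^{(i)}(Y))$. Comparing, we get $\ega(\sX(Y)) = \sX(Y)$, $\ega(\sV(Y)) = \sV(Y)$, $\ega(\Omega^{(i)}(Y)) = \Omega^{(i)}(Y)$, i.e., $\ega$ preserves the combinatorial invariants of $Y$. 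Conversely, for (i) $\Rightarrow$ (ii): if $\ega$ preserves the combinatorial invariants, then by the same computation $G/\sigma_\gamma(H)$ has the same combinatorial invariants as $G/H$. Since $\charr k = 0$, Losev's Uniqueness Theorem applies and yields that $\sigma_\gamma(H)$ and $H$ are conjugate in $G$, which is exactly (ii).

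The main obstacle I anticipate is the bookkeeping in the first paragraph: carefully checking that the semi-automorphism $\sigma_\gamma$ (which is only $\gamma$-semilinear, not $k$-linear) really does transport the combinatorial invariants of $Y$ to those twisted by $\ega$, in a way that is compatible with the normalization choices made in defining $\ega$ (the conjugating element $g$ in \Cref{ss:star-action} bringing $\tau(T) \subset \tau(B)$ back to $T \subset B$). One must verify that this inner correction does not affect the weight lattice, valuation cone, or colored data — which is true because inner automorphisms by elements of $T(k)$, and more generally conjugations, preserve all these invariants up to the canonical identifications. Once this transport statement is nailed down, both implications are immediate consequences of Losev's theorem and the triviality of combinatorial invariants under $G$-isomorphism. (I would also remark that only the direction (i) $\Rightarrow$ (ii) genuinely needs $\charr k = 0$, via Losev; the converse is formal.)
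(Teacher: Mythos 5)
Your argument is correct and is essentially the intended one: the paper itself gives no proof of \Cref{l:Bo} but simply cites Borovoi \cite[Proposition 8.12]{BG}, and the proof there proceeds exactly as you propose --- first showing that the combinatorial invariants of $G/\sigma_\gamma(H)$ are obtained from those of $G/H$ by applying $\ega$ (the inner correction by $g\in G(k)$ being harmless since conjugation preserves all the invariants), then deducing (ii)$\Rightarrow$(i) from invariance of the data under $G$-isomorphism and (i)$\Rightarrow$(ii) from Losev's Uniqueness Theorem, which indeed is the only place $\charr k=0$ is needed. The only point worth adding explicitly is that $\sigma_\gamma(H)$ is again a spherical subgroup (since $\sigma_\gamma$ carries Borel subgroups to Borel subgroups and open orbits to open orbits), so that Losev's theorem applies to the pair $H$, $\sigma_\gamma(H)$.
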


\begin{theorem}[{\cite[Theorem 9.3]{BG}}\hs]
\label{t:Bo}
Let $k,k_0,\Gamma,G,H,G_0$ be as in \Cref{ss:setup}, and let $Y=G/H$. Assume that:
\begin{enumerate}
\item[(i)] For all $\gamma \in \Gamma$, $\ve_\gamma$ preserves the combinatorial invariants of $Y$;
\item[(ii)] $\sN_G(H)=H$;
\end{enumerate}
Then $Y$ admits a $G_0$-equivariant $k_0$-form $Y_0$. This $k_0$-form is unique up to a unique isomorphism.
\end{theorem}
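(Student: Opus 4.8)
The plan is to promote the Galois data on the combinatorial invariants to an honest semilinear action of $\Gamma$ on $Y=G/H$ and then to descend. Assumption (i) together with \Cref{l:Bo} provides, for every $\gamma\in\Gamma$, an element $a_\gamma\in G(k)$ with $\siga(H)=a_\gamma H a_\gamma^{-1}$. Assumption (ii) enters at once: if $a'_\gamma$ is another such element then $a_\gamma^{-1}a'_\gamma\in\sN_G(H)=H$, so the coset $\bar a_\gamma:=a_\gamma H\in Y(k)$ is canonically attached to $\gamma$; equivalently, $\bar a_\gamma$ is the unique $k$-point of $Y$ whose stabilizer in $G$ equals $\siga(H)$.

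Next I would write down the action. Let $y_0=eH\in Y(k)$ and let $\mu_\gamma\colon Y\to Y$ be the composite of the $\gamma$-semilinear isomorphism $G/H\isoto G/\siga(H)$ induced by the $\gamma$-semi-automorphism $\siga$ of $G$ with the $k$-isomorphism $G/\siga(H)\isoto G/H$, $\ g\,\siga(H)\mapsto g\,a_\gamma H$; both are well defined by virtue of $\siga(H)=a_\gamma H a_\gamma^{-1}$, so $\mu_\gamma$ is a $\gamma$-semi-automorphism of $Y$ with $\mu_\gamma(g\cdot y_0)=\siga(g)\cdot\bar a_\gamma$, and one reads off that $\mu_\gamma(g\cdot y)=\siga(g)\cdot\mu_\gamma(y)$, i.e. $\mu_\gamma$ is $\siga$-equivariant. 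The crucial point is that $\gamma\mapsto\mu_\gamma$ is a homomorphism: for $\gamma,\delta\in\Gamma$ the element $\sigma_\gamma(a_\delta)\,a_\gamma$ conjugates $H$ onto $\sigma_\gamma(\sigma_\delta(H))=\sigma_{\gamma\delta}(H)$, and so does $a_{\gamma\delta}$, so by the uniqueness above $\sigma_\gamma(a_\delta)\,a_\gamma H=a_{\gamma\delta}H$, which is exactly the identity forcing $\mu_\gamma\circ\mu_\delta=\mu_{\gamma\delta}$. Here (ii) is indispensable: without it one obtains only a cocycle valued in $\sN_G(H)/H$, whose obstruction to being rigidified is the class appearing in \Cref{t:main-general'}. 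Finally $\mu$ is continuous: $H$ is defined over a finite extension $k_1/k_0$, which we may take Galois over $k_0$ since $k_0$ is perfect, and for $\gamma\in\Gal(k/k_1)$ one has $\siga(H)=H$, hence $\bar a_\gamma=y_0$ and $\mu_\gamma$ is the standard semi-automorphism attached to the $k_1$-form $G_1/H_1$ of $Y$, where $G_1=G_0\times_{k_0}k_1$.

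Given a continuous, $\sigma$-equivariant semilinear action $\mu\colon\Gamma\to\SAut(Y)$, Galois descent yields existence: $Y$ is quasi-projective and, after the reduction of the previous paragraph, $\mu$ amounts to a finite, $\sigma$-equivariant Galois descent datum for the $k_1$-variety $G_1/H_1$ over $k_0$, so it descends to a $k_0$-variety $Y_0$ carrying the descended $G_0$-action; thus $Y_0$ is a $G_0$-equivariant $k_0$-form of $Y$. (One can instead descend the wonderful embedding: by (i), $\ega$ preserves $\sV(Y)$, so the colored fan of the unique wonderful embedding $\iota\colon Y\into W$ — which by \Cref{t:wonderfan} consists of faces of $\sV$ and has no colors — is $\Gamma$-stable; by \Cref{t:Huruguen} the action extends to $W$, the projective $W$ descends to a $G_0$-variety $W_0$, and $Y_0$ is its open $G_0$-orbit.) For uniqueness, any $G_0$-equivariant $k_0$-form of $Y$ induces a $\sigma$-equivariant semilinear action $\mu'$; then $\mu'_\gamma(y_0)$ has stabilizer $\siga(\Stab_G(y_0))=\siga(H)$, so $\mu'_\gamma(y_0)=\bar a_\gamma$ and $\siga$-equivariance forces $\mu'=\mu$. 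Hence all $G_0$-equivariant $k_0$-forms of $Y$ induce the same semilinear action and are pairwise isomorphic, and the isomorphism is unique because $\Aut^{G_0}(Y_0)$ embeds into $\Aut^G(Y)\cong(\sN_G(H)/H)(k)$, which is trivial by (ii).

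The only substantive steps are the cocycle identity — valid precisely \emph{because} self-normalization collapses the ambiguity in the $a_\gamma$'s — and the descent of the semilinear action to a genuine $k_0$-variety, which the reduction to the finite extension $k_1/k_0$ turns into classical Galois descent. Everything else is routine manipulation of semilinear automorphisms.
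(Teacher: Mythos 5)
Your argument is correct and complete. Note that the paper itself gives no proof of \Cref{t:Bo} --- it is quoted from Borovoi \cite[Theorem 9.3]{BG} --- and your construction (use \Cref{l:Bo} to produce $a_\gamma$, use $\sN_G(H)=H$ to make the coset $a_\gamma H$ canonical, verify the cocycle identity $\sigma_\gamma(a_\delta)a_\gamma H=a_{\gamma\delta}H$ from that canonicity, reduce to a finite Galois subextension for continuity, and descend the quasi-projective $G/H$; uniqueness from triviality of $\Aut^G(Y)\cong(\sN_G(H)/H)(k)$) is precisely the argument of that reference, so there is nothing to compare it against within this paper. One small caveat: your parenthetical alternative via the wonderful embedding is circular as literally written, since \Cref{t:Huruguen} is stated here with a $G_0$-equivariant $k_0$-form $Y_0$ of $Y$ as part of the hypotheses (and \Cref{t:ewv}, which carries out that route, takes \Cref{t:Bo} as input); it can be repaired by observing that Huruguen's extension criterion only needs the semilinear action $\mu$ on $Y$ that you have already built, but your main line of proof does not depend on this aside.
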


We will need the following result, which is a generalization of Theorem 4.3 of Akhiezer and Cupit-Foutou \cite{ACF}.

\begin{theorem}\label{t:ewv}
Let $k,k_0,\Gamma,G,H,G_0$ be as in \Cref{ss:setup}, and let $Y=G/H$. Assume that:
\begin{enumerate}
\item[(i)] For all $\gamma \in \Gamma$, $\ve_\gamma$ preserves the combinatorial invariants of $Y$;
\item[(ii)] $\sN_G(H)=H$;
\end{enumerate}
Let $\iota \colon Y\into W$ be a wonderful embedding of $Y$. Then $\iota$ admits a unique $G_0$-equivariant $k_0$-form $\iota_0\colon Y_0\into W_0$.
\end{theorem}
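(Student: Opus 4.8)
The plan is to combine \Cref{t:Bo} with \Cref{t:Huruguen}, using the description of wonderful embeddings in terms of colored fans from \Cref{t:wonderfan}. By \Cref{t:Bo}, assumptions (i) and (ii) guarantee that $Y=G/H$ admits a $G_0$-equivariant $k_0$-form $Y_0$, unique up to unique isomorphism; fix it, together with the induced semilinear action $\gamma\mapsto\sigma_\gamma^{(Y)}$ on $Y$. The first real step is to show that this semilinear action extends to the wonderful compactification $W$. By \Cref{t:Huruguen}, this amounts to verifying that the colored fan $\Sigma(W)$ is $\Gamma$-stable. Here I would invoke \Cref{t:wonderfan}: $\Sigma(W)$ is the colored fan with no colors whose unique maximal colored cone is $(\sV(Y),\emptyset)$, together with all its faces $(\sC_0,\emptyset)$ with $\sC_0^\circ\cap\sV\neq\emptyset$. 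Since $Y_0$ is a $G_0$-equivariant $k_0$-form, the induced action of $\Gamma$ on $V$ preserves $\sV(Y)$ (this is part of the package in the ``Galois descent for spherical embeddings'' subsection, and it is exactly condition (i) translated via \Cref{l:Bo}); as $\Gamma$ acts $\Q$-linearly on $V$, it permutes the faces of $\sV(Y)$, and it clearly fixes the empty set of colors. Hence every colored cone in $\Sigma(W)$ is carried to another one, i.e. $\Sigma(W)$ is $\Gamma$-stable, so by \Cref{t:Huruguen} the semilinear action extends to $W$, giving $\gamma\mapsto\sigma_\gamma^{(W)}$ with $\iota\circ\sigma_\gamma^{(Y)}=\sigma_\gamma^{(W)}\circ\iota$.

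The second step is to package the extended semilinear action as a $k_0$-form. Since $W$ is a wonderful $G$-variety it is projective, hence quasi-projective, so by Galois descent for quasi-projective varieties (Serre, \cite[III.1.3]{Serre}, as already used in \Cref{ss:inner}) the semilinear action $\gamma\mapsto\sigma_\gamma^{(W)}$ comes from a $k_0$-form $W_0$ of $W$; one then checks that the $G_0$-action on $Y$ extends to a $G_0$-action on $W$ descending to $W_0$, using that each $\sigma_\gamma^{(W)}$ is $\sigma_\gamma$-equivariant (this equivariance is inherited from the equivariance of $\sigma_\gamma^{(Y)}$ on the dense open orbit $Y$, since $W$ is reduced and separated and $Y$ is dense, so a morphism agreeing with the action on a dense open set is the action). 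Finally the open immersion $\iota\colon Y\into W$ is $\Gamma$-equivariant by construction, hence descends to a $G_0$-equivariant open immersion $\iota_0\colon Y_0\into W_0$; this is the desired $G_0$-equivariant $k_0$-form of $\iota$.

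For uniqueness, suppose $\iota_0'\colon Y_0'\into W_0'$ is another $G_0$-equivariant $k_0$-form of $\iota$. Then $Y_0'$ is a $G_0$-equivariant $k_0$-form of $Y$, so by the uniqueness clause of \Cref{t:Bo} there is a unique $G_0$-isomorphism $Y_0\isoto Y_0'$; base-changing to $k$ it intertwines $\sigma_\gamma^{(Y)}$ with the semilinear action attached to $Y_0'$. Since $\iota_0'$ realizes $W_0'$ as an equivariant embedding of $Y_0'$, pulling back the semilinear action of $W_0'$ along $\iota'$ gives an extension of the (transported) semilinear action on $Y$ to $W$; but an extension of a semilinear action on a dense open subvariety to a fixed separated variety is unique, so the semilinear actions on $W$ induced by $W_0$ and $W_0'$ coincide (after transport), whence $W_0\cong W_0'$ compatibly, and the isomorphism is unique because it is already determined on the dense open $Y_0$.

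The main obstacle I anticipate is the verification that the colored fan $\Sigma(W)$ is $\Gamma$-stable, i.e. pinning down precisely why the $\Gamma$-action on $V$ coming from the $k_0$-form $Y_0$ of the homogeneous space preserves the valuation cone $\sV(Y)$ and maps faces-meeting-$\sV$ to faces-meeting-$\sV$; this is where \Cref{l:Bo}, the compatibility statements in the Galois-descent subsection, and \Cref{t:wonderfan} must be dovetailed carefully. The descent steps in the second and third paragraphs are essentially formal once that point is secured, but one should be slightly careful that the extension of a semilinear (as opposed to linear) action from a dense open set is genuinely unique — this follows because two such extensions agree on a dense open set of a separated $k$-scheme and are isomorphisms, hence are equal.
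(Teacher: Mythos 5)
Your proposal is correct and follows essentially the same route as the paper: get $Y_0$ from \Cref{t:Bo}, use \Cref{t:wonderfan} to see that $\Sigma(W)$ is the fan with unique maximal colored cone $(\sV(Y),\emptyset)$ and hence is $\Gamma$-stable, extend the semilinear action via \Cref{t:Huruguen}, and descend using projectivity of $W$. The only cosmetic difference is the final descent step, where the paper cites Huruguen's Theorem 2.26 (which packages quasi-projective Galois descent together with the $G_0$-equivariance) rather than arguing directly from Serre as you do.
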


\begin{proof}
Since a form is uniquely determined by the semilinear action that it induces, and a semilinear action $\Gamma \to \SAut(W)$ is uniquely determined by its restriction to the (dense) open $G$-orbit, we obtain uniqueness from \Cref{t:Bo}.

Let $Y_0$ be the (unique) $G_0$-equivariant $k_0$-form of $Y$ guaranteed by \Cref{t:Bo}.
Let $\sE_W$ be the colored fan corresponding to the spherical embedding $\iota \colon G/H \into W$.

The $k_0$-form $Y_0$ gives  compatible $\Gamma$-actions on $\X^*(B),\sX(Y),\sV(Y),\sD(Y)$; see Huruguen \cite[Section 2.2]{Huruguen}.
Since the embedding $\iota \colon G/H \into W$ is wonderful, there is a unique maximal colored cone in $\sE_W$, and it consists of $\sV(Y)$ \emph{without colors} (see \Cref{t:wonderfan}).
Since by assumption $\Gamma$ preserves $\sV(Y)$, we see that the colored fan $\sE_W$ corresponding to $\iota \colon G/H \into W$ is $\Gamma$-stable. By \Cref{t:Huruguen}, the semilinear action of $\Gamma$ extends to $W$. Since $W$ is projective, Theorem 2.26 in Huruguen \cite{Huruguen} concludes the proof.
\end{proof}

\section{Rational points in wonderful varieties over large fields}
\label{s:rational}

\begin{lemma}[well-known]
\label{l:Stab}
Let $G$ be an algebraic group over $k$ and let $Y=G/H$ be a homogeneous space.
Let $G_0$ be a $k_0$-form of $G$ with corresponding semilinear action
\[\sigma\colon\Gamma\to\SAut(G).\]
Assume that $Y$ admits a $G_0$-equivariant $k_0$-form $Y_0$.
Then $\sigma_\gamma(H)$ is conjugate to $H$ for every $\gamma\in \Gamma$.
\end{lemma}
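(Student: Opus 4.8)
The plan is to deduce the statement from the equivariance of the semilinear action on $Y$ determined by the $k_0$-form $Y_0$, via a computation of the stabilizer of a moved base point. By \Cref{ss:semilinear}, the $G_0$-equivariant form $Y_0$ induces a semilinear action $\mu\colon\Gamma\to\SAut(Y)$ for which each $\muga$ is $\siga$-equivariant, i.e.\ $\muga(g\cdot y)=\siga(g)\cdot\muga(y)$ for all $g\in G(k)$, $y\in Y(k)$. I would fix the base point $y_0=eH\in Y(k)$, note that $\Stab_G(y_0)=H$, and use that $G(k)$ acts transitively on $Y(k)$ (as $k$ is algebraically closed and $H$ is smooth) to write $\muga(y_0)=a_\gamma\cdot y_0$ for a suitable $a_\gamma\in G(k)$, for each $\gamma\in\Gamma$.

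The key step is then to identify $\Stab_G(\muga(y_0))$ in two ways. On the one hand it equals $a_\gamma H a_\gamma^{-1}$, since $\muga(y_0)=a_\gamma\cdot y_0$. On the other hand, applying $\muga^{-1}=\mu_{\gamma^{-1}}$ to the equation $g\cdot\muga(y_0)=\muga(y_0)$ and using the equivariance of $\mu_{\gamma^{-1}}$ together with $\sigma_{\gamma^{-1}}=\siga^{-1}$, one checks that $g$ fixes $\muga(y_0)$ if and only if $\siga^{-1}(g)$ fixes $y_0$, i.e.\ if and only if $g\in\siga(H)(k)$; hence $\Stab_G(\muga(y_0))=\siga(H)$. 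Comparing the two descriptions gives $\siga(H)=a_\gamma H a_\gamma^{-1}$, which is the claim.

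I do not anticipate a real obstacle; this is essentially the stabilizer-theoretic shadow of the equivariance of $\mu$, in the spirit of \Cref{l:Bo}. The only things needing a brief justification are routine: that $\muga$ acts as a bijection on $k$-points with $\muga^{-1}=\mu_{\gamma^{-1}}$ (immediate since $\mu$ is a homomorphism into $\SAut(Y)$), that $\siga^{-1}(g)\in H(k)\iff g\in\siga(H)(k)$ is just the definition of the $k$-subgroup $\siga(H)$, and that the resulting equality of sets of $k$-points upgrades to an equality of $k$-subgroups because $k$ is algebraically closed and the groups involved are smooth.
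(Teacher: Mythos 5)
Your proposal is correct and follows essentially the same route as the paper's proof: both use the $\siga$-equivariance of $\muga$ to identify $\Stab_G(\muga(eH))$ with $\siga(H)$, and transitivity of $G(k)$ on $Y(k)$ to write $\muga(eH)=a_\gamma\cdot eH$ and hence identify that stabilizer with $a_\gamma H a_\gamma^{-1}$. The only difference is that you spell out the computation of $\Stab_G(\muga(eH))=\siga(H)$ via $\mu_{\gamma^{-1}}$, which the paper leaves implicit.
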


\begin{proof}
Denote by $\sigma \colon \Gamma \to \SAut(G)$ the semilinear action of $\Gamma$ on $G$ induced by the $k_0$-form $G_0$, and by $\mu \colon \Gamma \to \SAut(Y)$ the semilinear action of $\Gamma$ on $Y$ induced by the $k_0$-form $Y_0$.

Let $\gamma \in \Gamma$.
Since the form $Y_0$ is $G_0$-equivariant, we see that $\muga$ is $\siga$-equivariant. Thus we see that $\Stab_G(\muga(eH))=\siga(H)$. Choose $g_0\in G(k)$ such that $\muga(eH)=g_0 H$. An elementary calculation shows that $\Stab_G(g_0 H)=g_0H g_0^{-1}$. We see that $\siga(H)=g_0 H g_0^{-1}$ for some $g_0\in G(k)$.
\end{proof}

For a semilinear action $\mu \colon \Gamma \rightarrow \SAut(X)$ of $\Gamma$ on $X$ we will call a ``$\Gamma$-fixed $k$-point in $X$'' a point $x\in X(k)$ such that for all $\gamma \in \Gamma$, $\muga (x)=x$. If $\mu$ arises from a $k_0$-form $X_0$ of $X$, then the $\Gamma$-fixed $k$-points in $X$ are in cannonical bijection with $k_0$-rational points in $X_0$, given by sending a $k_0$-rational point $x\colon \Spec k_0\to X_0$ in $X_0$ to the $\Gamma$-fixed $k$-point $\Spec k\to \Spec k_0\xrightarrow{x} X_0$.

\begin{lemma}\label{l:flag}
Let $k_0,k,\Gamma,G,G_0$ be as in \Cref{ss:setup}.
Let $P\subseteq G$ be a parabolic subgroup, and let $X=G/P$ denote the corresponding generalized flag variety.
Assume that  $G_0$ is {\bf \emph{quasi-split}} and that  $X$ admits a $G_0$-equivariant $k_0$-form $X_0$.
Then $X_0$ has a  $k_0$-point.
\end{lemma}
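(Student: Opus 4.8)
The plan is to reduce the statement to a Galois-cohomological triviality using the quasi-split hypothesis. First I would observe that, since $X_0$ is a smooth projective $k_0$-variety that becomes a generalized flag variety $G/P$ over $k$, it corresponds to a $k_0$-form of $G/P$, hence (after translating by an element of $G(k)$) to a $\Gamma$-conjugacy class of parabolic subgroups $P'\subset G$ that is stable under the twisted semilinear action, i.e. to a cocycle class in $H^1(k_0,\sN_G(P))$ lying over the $*$-action; the set of $k_0$-points of $X_0$ is nonempty if and only if this class comes from a parabolic actually defined over $k_0$ for the given $k_0$-form $G_0$. The key reduction is then the classical fact that over \emph{any} field, if $G_0$ is quasi-split, every $G_0(k_0)$-conjugacy class of parabolic $k_0$-subgroups is nonempty: quasi-split means there is a Borel $B_0\subseteq G_0$ defined over $k_0$, and standard parabolics $P_I\supseteq B_0$ (indexed by $\Gamma$-stable subsets $I$ of the Dynkin diagram) are then defined over $k_0$ and represent every conjugacy class whose type is $\Gamma$-stable; see Borel--Tits or \cite[\S{}7.2]{Conrad}.

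So the core of the argument is to check that the \emph{type} of the parabolic attached to $X_0$ is $\Gamma$-stable for the $*$-action $\ve$ of $G_0$. This is where I would use that $X$ is assumed to admit a $G_0$-\emph{equivariant} $k_0$-form: the semilinear action $\mu\colon\Gamma\to\SAut(X)$ induced by $X_0$ is $\sigma$-equivariant, where $\sigma$ is the semilinear action induced by $G_0$. Applying \Cref{l:Stab} to $Y=X=G/P$, we get that $\siga(P)$ is conjugate to $P$ for every $\gamma\in\Gamma$; equivalently, the conjugacy class of $P$ is preserved by $\sigma$, hence the type of $P$ (a subset $S_P\subseteq S$, the simple roots of a Levi) is preserved by $\ega\in\Aut\,\BRD(G)$ for all $\gamma$. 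Thus $S_P$ is a $\Gamma$-stable subset of $S$ with respect to the $*$-action defined by $G_0$.

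Now I would invoke \Cref{p:inn-qs} only implicitly (it is not even needed since $G_0$ is already quasi-split by hypothesis): choose a Borel $B_0\subseteq G_0$ defined over $k_0$ and a maximal torus inside it, fix the corresponding pinning so that the $*$-action $\ve$ is realized by honest permutations of the simple roots. Since $S_P$ is $\Gamma$-stable, the standard parabolic $P_0=P_{S_P}\supseteq B_0$ of type $S_P$ is a parabolic $k_0$-subgroup of $G_0$. Base-changing, $(P_0)_k$ is a parabolic of $G$ of the same type as $P$, hence $G(k)$-conjugate to $P$; so $G_0/P_0$ is a $G_0$-equivariant $k_0$-form of $X=G/P$, and it obviously has a $k_0$-point, namely $eP_0$. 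Finally, $G_0$-equivariant $k_0$-forms of a homogeneous space $G/P$ with $P=\sN_G(P)$ self-normalizing are rigid up to a \emph{non}-unique isomorphism in general, but here I only need \emph{existence} of \emph{one} equivariant form with a $k_0$-point together with the hypothesis that $X_0$ \emph{is} such a form; the cleanest route is: all $G_0$-equivariant $k_0$-forms of $G/P$ are classified by $\ker\big(H^1(k_0,\sN_G(P))\to H^1(k_0,\Aut(G/P))\big)$ or, more simply, by noting that $\sN_G(P)=P$ so a $G_0$-equivariant form is exactly a $\Gamma$-stable parabolic for the $\sigma$-action, and since all such of the given type are $G(k)$-conjugate and one of them ($P_0$) is defined over $k_0$, \emph{every} $G_0$-equivariant $k_0$-form of $G/P$ is isomorphic to $G_0/P_0$, hence has a $k_0$-point. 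I expect the main obstacle to be bookkeeping: pinning down precisely why ``$\siga$ preserves the $G(k)$-conjugacy class of $P$'' upgrades to ``the $*$-action preserves the type $S_P$'' (one must pass through the choice of $g\in G(k)$ conjugating $\siga(T)\subset\siga(B)$ back to $T\subset B$ as in \Cref{ss:star-action}, and check the type is read off correctly), and then invoking the Borel--Tits theorem on existence of parabolics of a given $\Gamma$-stable type over a quasi-split group in exactly the form needed — everything else is soft.
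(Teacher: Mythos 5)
Your argument is correct and follows essentially the same route as the paper: fix a Borel $B_0\subseteq G_0$ defined over $k_0$, use \Cref{l:Stab} to see that $\sigma_\gamma(P_I)$ is conjugate to $P_I$, deduce that the type $I$ is $\ve_\gamma$-stable, and conclude that the standard parabolic $P_I$ is $\Gamma$-invariant --- and the bookkeeping you flag as the main obstacle is vacuous in the quasi-split case, since $\sigma_\gamma$ already preserves $T\subset B$ and hence $\sigma_\gamma(P_I)=P_{\ve_\gamma(I)}$ with no conjugating element needed. The only (harmless) divergence is at the very end: the paper reads off the $\Gamma$-fixed point of $X$ directly as the unique point with stabilizer $P_I$ (using $\sN_G(P_I)=P_I$), whereas you construct $G_0/P_I$ and appeal to uniqueness of the $G_0$-equivariant $k_0$-form.
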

See Moser-Jauslin and Terpereau \cite[Remark 3.13]{MJT} for a generalization to homogeneous \emph{horospherical} varieties (again, the authors only consider the case $k_0=\R$, but this does not change anything).
\begin{proof}
Let $B_0\subseteq G_0$ be a Borel subgroup defined over $k_0$, and
let $T_0\subseteq B_0$ be a maximal torus.
Set $B=(B_0)_k$, $T=(T_0)_k$.
Consider the set of simple roots $S=S(G,T,B)$.
For $I\subseteq S$, we denote by $P_I$ the standard parabolic subgroup generated
by $B$ and the unipotent subgroups of $G$ associated with the simple roots $\alpha \in I$; see Malle and Testerman \cite[Definition 12.3]{Malle-Testerman}.
Any parabolic subgroup of $G$ is conjugate to  $P_I$ for some $I\subseteq S$.
Moreover, two standard parabolic subgroups $P_I$ and $P_J$ for $I\neq J$ are not conjugate; see Malle and Testerman \cite[Proposition 12.2]{Malle-Testerman}.

Our parabolic subgroup $P$ is conjugate to $P_I$ for some $I\subseteq S$.
Consider the  semilinear action $\sigma \colon \Gamma \rightarrow \SAut(G)$ defined by $G_0$.
Since $G_0$ is quasi-split, it follows from the definition of the $\ast$-action
\[\ve\colon  \Gamma \to\Aut\,\BRD(G,T,B)\]
in \Cref{ss:star-action}
that for all $\gamma \in \Gamma$, we have $\sigma_\gamma(P_I)=P_{\veg(I)}$.
Since $X$ admits a $G_0$-equivariant $k_0$-form, and $P_I$ is the stabilizer of some point in $X$,
by \Cref{l:Stab}  the subgroup $\sigma_\gamma(P_I)$ is conjugate to $P_I$.
It follows that $\veg(I)=I$, and therefore, $\sigma_\gamma(P_I)=P_I$ (for all $\gamma\in\Gamma$).
This $\Gamma$-invariant subgroup $P_I$ corresponds to a $\Gamma$-fixed $k$-point $x_0\in X(k)$.
Clearly, the point $x_0\in X_0(k)=X(k)$ is defined over $k_0$.
\end{proof}

\begin{subsec}\label{ss:Gamma-fixed-point}
Assume now that $W$ is a wonderful $G$-variety with a given $G_0$-equivariant $k_0$-form $W_0$, given by a semilinear action $\mu \colon \Gamma \rightarrow \SAut(W)$.
Denote by $X=\bigcap_{i=1}^{r}X_i$ the unique closed $G$-orbit in $W$.
Note that since $\muga$ is $\siga$-equivariant for all $\gamma \in \Gamma$, $\mu _{\gamma}(X)$ is a closed $G$-orbit for every $\gamma \in \Gamma$. This implies that $X$ has a cannonical $G_0$-equivariant $k_0$-form.
Since $W$ has a \emph{unique} closed $G$-orbit, $X$ is $\Gamma$-invariant.
Since $X$ is a complete homogeneous $G$-variety, it is a generalized flag variety, i.e.
$X=G/P$ for some parabolic subgroup $P\subseteq G$.
By \Cref{l:flag}, if $G_0$ is quasi-split then the variety $X$ contains a $\Gamma$-fixed $k$-point.
\end{subsec}

\begin{theorem}\label{t:fp}
Let $k_0,k,\Gamma,G,G_0$ be as in \Cref{ss:setup}. Let $W$ be a wonderful $G$-variety with a $G_0$-equivariant $k_0$-form $W_0$,
given by a semilinear action $\mu \colon \Gamma \rightarrow \SAut(W)$. Assume that $k_0$ is {\bf \emph{large}}
and that the $k_0$-form $G_0$ is {\bf\emph{quasi-split}}. Then:
\begin{enumerate}
 \item[(i)] Every $\Gamma$-invariant $G$-orbit in $W$ has a $\Gamma$-fixed $k$-point.
 \item[(ii)] The open $G$-orbit in $W$ has a $\Gamma$-fixed $k$-point.
 \item[(iii)] If moreover the $k_0$-form $G_0$ is split, then every $G$-orbit in $W$ is $\Gamma$-invariant.
\end{enumerate}
\end{theorem}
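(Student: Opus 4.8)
The plan is to prove the three parts in the order (iii), (i), (ii), since (iii) is a statement about orbits not needing largeness, and (ii) will follow from (i) applied to the open orbit together with the observation that the open orbit is always $\Gamma$-invariant (it is the unique open $G$-orbit, hence preserved by each $\mu_\gamma$, which permutes $G$-orbits). For part (iii): when $G_0$ is split, the $*$-action $\ve\colon\Gamma\to\Aut\,\BRD(G,T,B)$ is trivial, so each $\sigma_\gamma$ preserves every standard parabolic $P_I$ up to conjugacy, and more importantly each $\mu_\gamma$ preserves the colored fan $\Sigma(W)$ elementwise: by \Cref{t:Huruguen}, $\gamma(\sC_Y)=\sC_{\gamma(Y)}$ and $\gamma(\sF_Y)=\sF_{\gamma(Y)}$, while the $\Gamma$-action on $V$ and on $\sD$ induced by $W_0$ is trivial because the $*$-action is trivial and (for the colors) the colored fan of a wonderful variety has no colors; hence $\sC_{\gamma(Y)}=\sC_Y$, and since $Y\mapsto(\sC_Y,\sF_Y)$ is a bijection onto $\Sigma(W)$, we get $\gamma(Y)=Y$ for every $G$-orbit $Y$.

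For part (i), let $Y\subseteq W$ be a $\Gamma$-invariant $G$-orbit. The key structural fact is that the closure $\overline Y$ is itself a wonderful $G$-variety (a partial intersection $\bigcap_{i\in J}X_i$ of the boundary divisors is smooth, complete, and wonderful with its induced boundary), and its unique closed $G$-orbit is the closed orbit $X=\bigcap_{i=1}^r X_i$ of $W$. First I would show $\overline Y$ inherits a $G_0$-equivariant $k_0$-form: since $Y$ is $\Gamma$-invariant, so is its closure $\overline Y$, and the restriction of $\mu$ to $\overline Y$ gives the required semilinear action; equivalently, $\overline Y$ corresponds to the $\Gamma$-stable subfan of $\Sigma(W)$ consisting of those colored cones lying in the face lattice above $(\sC_Y,\sF_Y)$. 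By \Cref{ss:Gamma-fixed-point} (applied to the wonderful variety $\overline Y$, whose closed orbit is $X$, using that $G_0$ is quasi-split), the closed orbit $X$ has a $\Gamma$-fixed $k$-point; call it $w_0$, with stabilizer a $\Gamma$-invariant parabolic $P\subseteq G$. Now I would use largeness: consider the $P$-orbit (equivalently, a suitable $B$-orbit, or better the orbit of $w_0$ under the unipotent radical interacting with the local structure theorem) passing through $w_0$ inside $\overline Y$. More precisely, by the local structure theorem for wonderful varieties there is a $P$-stable affine open neighbourhood $W_{X,0}\cong R_u(P)\times_{} Z$ (a ``slice'' $Z$, an affine space on which a Levi acts), defined over $k_0$ since $P$ and $w_0$ are; this open set meets $Y$. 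The $k_0$-points of this affine $k_0$-variety are Zariski dense by largeness (it has the smooth $k_0$-point $w_0$), hence its intersection with the open-in-$\overline Y$ orbit $Y$ contains a $k_0$-point, i.e. a $\Gamma$-fixed $k$-point of $Y$.

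The main obstacle I expect is making the ``slice'' argument precise and $k_0$-rational: one needs a $P$-stable affine open subset of $\overline Y$ containing $w_0$ that (a) is defined over $k_0$ — which follows because $P$, $w_0$, and the divisors $X_i$ are all $\Gamma$-invariant, so the canonical open set $\overline Y\setminus\bigcup_{i\in J}X_i$ constructed from the closed orbit is $\Gamma$-stable — and (b) is isomorphic as a $k_0$-variety to an affine space (or at least is $k_0$-rational with a smooth $k_0$-point), so that largeness forces density of $k_0$-points and in particular a $k_0$-point in the dense orbit $Y$. The cleanest route is Luna's description: the open cell is $P$-equivariantly isomorphic to $R_u(P)\times V$ for a representation $V$ of a Levi quotient, all of this being $k_0$-rational once the base point is; then $w_0$ itself is a smooth $k_0$-point, density follows, and since $Y$ is open and dense in $\overline Y$ the intersection of $Y$ with the cell is a nonempty open subset and therefore contains $k_0$-points, each of which is a $\Gamma$-fixed $k$-point of $Y$ as required. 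Part (ii) is then immediate by applying (i) to $Y=W^\circ$, the open orbit, which is $\Gamma$-invariant as noted.
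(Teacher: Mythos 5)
Your proposal is correct and follows essentially the same route as the paper: part (iii) via triviality of the $*$-action for split $G_0$, the absence of colors in a wonderful fan, and Huruguen's correspondence; part (i) via a $\Gamma$-fixed point on the closed orbit $X=G/P$ (using quasi-splitness and \Cref{l:flag}) followed by largeness; and part (ii) by applying (i) to the open orbit, which is $\Gamma$-invariant as the unique open orbit. The only divergence is the last step of (i): rather than invoking the local structure theorem to produce a $P$-stable affine cell defined over $k_0$, the paper applies largeness directly to $Z=\overline{Y}$, which is smooth because it is a partial intersection of the normal-crossings boundary divisors (a fact you already record), after descending the restricted semilinear action to an honest $k_0$-form of $Z$ by a spreading-out argument and Borel--Serre; this sidesteps the $k_0$-rationality of the slice and the Levi representation that your version would still need to justify, and that same descent step (which you assert but do not prove for $\overline{Y}$) is the one genuinely technical point in either route.
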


This theorem is a generalization of Theorem 3.6 (ii) of Akhiezer and Cupit-Foutou \cite{ACF}, where the authors considered the case when $k=\R$ and $G_0$ is split.

\begin{proof}
(i) Let $X'$ be a $\Gamma$-invariant $G$-orbit in $W$. Since $X'$ is $\Gamma$-invariant, so is its closure $Z:=\overline{X'}$. Denote by $X$ the (unique) closed $G$-orbit in $W$. Notice that by condition (iii) in \Cref{d:wonderful}, we have $X\subseteq Z$. By \Cref{ss:Gamma-fixed-point},  $X$ contains a $\Gamma$-fixed $k$-point, and hence, $Z$ contains a $\Gamma$-fixed $k$-point.
Since the divisors $X_i$ have normal crossings, and $Z$ is the intersection of some of these divisors, the variety $Z$ is smooth, and hence also reduced (see \Cref{d:wonderful}). Since $Z$ is reduced, and $Z(k)\subseteq W(k)$ is $\Gamma$-invariant, we see that the semilinear action of $\Gamma$ on $W$ restricts to a semilinear action of $\Gamma$ on $Z$, which we denote by $\nu\colon \Gamma \to \SAut(Z)$.
Denote by $\iota$ the embedding of $Z$ into $W$. Since everything is of finite type, there exists a finite extension $k_1/k_0$ in $k$, a $k_1$-scheme $Z_1$ and a closed $k_1$-embedding $\iota_1\colon Z_1\into W_1:=W_0\times_{k_0}k_1$ such that $\iota_1\times_{k_1}k=\iota$. The semilinear action of $\Gamma_1:=\Gal(k_1/k_0)$ on $Z$ induced from $Z_1$ is the same as $\nu|_{\Gamma_1}$, since $Z$ is reduced and on closed points both are given by $\mu$. Since $Z$ can be covered by $\Gamma$-invariant affine open subsets (take such a covering of $W$ and intersect with $Z$), by Borel and Serre \cite[Lemma 6.12]{Borel-Serre}, we see that $\nu$ is induced from a $k_0$-form $Z_0$ of $Z$.
Since $k_0$ is large, and $Z$ is smooth and has a $\Gamma$-fixed $k$-point, we obtain that $\Gamma$-fixed $k$-points (or equivalently: $k_0$-rational points in $Z_0$) are Zariski dense in $Z$.
In particular, the open subset $X'\subseteq Z$ contains a $\Gamma$-fixed $k$-point.

(ii) Let $Y$ be the (unique) open $G$-orbit in $W$. Since $\mu _{\gamma}(Y)$ is an open $G$-orbit for every $\gamma \in \Gamma$, we get that $Y$ is $\Gamma$-invariant, hence $Y$ contains a $\Gamma$-fixed $k$-point by (i).

(iii) Denote by $Y=G/H$ the open $G$-orbit in $W$. Assume now that $G_0$ is split, and denote by $\sE_W$ the colored fan corresponding to the embedding $G/H\into W$. Since $G_0$ is split, $\Gamma$ acts trivially on $\X^*(B)$, and hence preserves $\sX (Y)$. We see that it also acts trivially on $V=\Hom(\sX(Y),\Q)$.
Since $W$ is wonderful, by \Cref{t:wonderfan} there are no colors in the colored fan $\sE_W$. Hence $\Gamma$ preserves each colored cone in $\sE_W$.
From the second assertion of \Cref{t:Huruguen}, we see that every $G$-orbit in $W$ is $\Gamma$-invariant.
\end{proof}

\section{Spherical subgroup defined over the base field}
\label{s:ssubgroup}

The following theorem is a generalization of Theorem 4.4 of Akhiezer and Cupit-Foutou \cite{ACF}, where the authors consider the case when $k_0=\R$ and $G_0$ is split.

\begin{theorem}\label{t:main-qs}
Let $k$ be an algebraically closed field of characteristic $0$, and let $k_0\subseteq k$ be a subfield such that $k/k_0$ is Galois. Denote by $\Gamma = \Gal (k/k_0)$ the Galois group. Let $G$ be a connected reductive $k$-group, $G_0$ a $k_0$-form of $G$. Let $H\subseteq G$ be a spherical subgroup. Assume that $k_0$ is {\bf \emph{large}} and that the $k_0$ form $G_0$ is {\bf\emph{quasi-split}}. Assume moreover that $\charr k_0=0$, and that:
\begin{enumerate}
    \item[(i)] $\Gamma$ preserves the combinatorial invariants of $G/H$ when acting on $\BRD(G)$ via the homomorphism
$\ve\colon\Gamma\to\Aut\,\BRD(G)$  defined by the $k_0$-form $G_0$ of $G$;
    \item[(ii)] $\sN_G(\sN_G(H))=\sN_G(H)$.
\end{enumerate}
Then $H$ is conjugate to a subgroup defined over $k_0$, i.e., there exist $g\in G(k)$ and an algebraic subgroup $H_0\subset G_0$
such that $gHg^{-1}=H_0\times_{k_0} k$.
\end{theorem}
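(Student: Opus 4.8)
The strategy is to reduce the general case where $\sN_G(H)$ is only assumed self-normalizing (condition (ii)) to the case $\sN_G(H) = H$ already handled by \Cref{t:ewv}, \Cref{t:fp} and \Cref{t:Bo}. First I would set $N := \sN_G(H)$ and observe that by hypothesis $\sN_G(N) = N$, so $N$ is a self-normalizing spherical subgroup (it is spherical because it contains $H$ and $G/N$ is a quotient of $G/H$). Moreover the combinatorial invariants of $G/N$ are determined by those of $G/H$ in a $\Gamma$-equivariant way: the weight lattice of $G/N$ is a sublattice of $\sX(G/H)$, the valuation cone and the colored data of $G/N$ are obtained from those of $G/H$ by a canonical recipe (see Borovoi \cite[Section 7]{BG} and Losev \cite{Losev}), so assumption (i) for $G/H$ gives assumption (i) for $G/N$. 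Hence by \Cref{t:ewv} the wonderful embedding $\iota_N \colon G/N \into W$ admits a $G_0$-equivariant $k_0$-form $W_0$, and by \Cref{t:fp}(ii) (using that $k_0$ is large and $G_0$ is quasi-split) the open $G$-orbit $G/N \subseteq W$ has a $\Gamma$-fixed $k$-point. This means there is $g \in G(k)$ with $\sigma_\gamma(gNg^{-1}) = gNg^{-1}$ for all $\gamma \in \Gamma$; replacing $H$ by $gHg^{-1}$ we may assume $N = \sN_G(H)$ is itself defined over $k_0$, i.e. $\Gamma$-invariant.

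Next I would descend to work inside $N$. Since $N$ is $\Gamma$-invariant, the semilinear action $\sigma$ restricts to a semilinear action $\sigma^N \colon \Gamma \to \SAut(N)$, giving a $k_0$-form $N_0$ of $N$. Now $H \subseteq N$ is a \emph{normal} subgroup with $N/H = A$ an abelian group of multiplicative type (Losev \cite[Theorem 2]{Losev}). By \Cref{l:Bo}, assumption (i) guarantees that for each $\gamma$ there is $a_\gamma \in G(k)$ with $\sigma_\gamma(H) = a_\gamma H a_\gamma^{-1}$; since $\sigma_\gamma(H)$ and $H$ are both normal in the $\Gamma$-invariant group $N$ and both have quotient $A$, in fact $\sigma_\gamma(H) = n_\gamma H n_\gamma^{-1}$ for some $n_\gamma \in N(k)$, so the set of $\Gamma$-conjugates of $H$ inside $N$ is a single $N(k)$-orbit, parametrized by $N(k)/\sN_N(H)(k) = N(k)/N(k)$ — wait, more precisely the normalizer of $H$ in $N$ is all of $N$, so $H$ is the \emph{unique} such normal subgroup, hence $\sigma_\gamma(H) = H$ for every $\gamma$ and $H$ is already $\Gamma$-invariant.

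The main obstacle I anticipate is exactly the step asserting $\sigma_\gamma(H) = H$: one must know that $H$ is determined among the normal subgroups of $N$ by having quotient isomorphic to $A$ as a $G$-variety datum, which ultimately rests on Losev's description of $H$ as the kernel of the natural $N$-action, or equivalently on the fact that $H$ is characteristic in $N$ in a suitable sense. If $H$ is not literally the unique such subgroup, one instead argues that the $n_\gamma$ can be chosen to form a cocycle valued in $N(k)$ (using the normality to arrange the cocycle relation up to $H$), and then invokes that $H^1$ of $\Gamma$ with coefficients in the $k_0$-form of $N/H = A$, a group of multiplicative type, controls the obstruction; here largeness of $k_0$ is not needed but one needs that the relevant class vanishes, which in the quasi-split case follows because the $*$-action is the one coming from $N_0$ and the twisting is inner. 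In either presentation the conclusion is that after the single conjugation by $g$ above, $H$ is $\Gamma$-invariant, hence of the form $H_0 \times_{k_0} k$ for a $k_0$-subgroup $H_0 \subseteq G_0$, which is the desired statement. I would write up the cleaner uniqueness argument if Losev's results supply it directly, and fall back on the cohomological argument otherwise.
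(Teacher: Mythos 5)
Your overall reduction is the same as the paper's: pass to $N=\sN_G(H)$, use the self-normalization hypothesis to get a wonderful embedding of $G/N$, extend the $G_0$-equivariant $k_0$-form to that embedding via \Cref{t:ewv}, and use largeness plus quasi-splitness (\Cref{t:fp}) to produce a $\Gamma$-fixed point $gN$ in the open orbit, so that after replacing $H$ by $gHg^{-1}$ the normalizer $N$ becomes $\Gamma$-stable. (A small remark on an intermediate step: the paper gets ``$\ve_\gamma$ preserves the combinatorial invariants of $G/N$'' directly from the equivalence in \Cref{l:Bo}, since $\sigma_\gamma(N)=a_\gamma N a_\gamma^{-1}$ follows from $\sigma_\gamma(H)=a_\gamma H a_\gamma^{-1}$; no recipe expressing the invariants of $G/N$ through those of $G/H$ is needed.)

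The genuine gap is in your final step, and neither of the two ways you offer to close it works as written. The assertion that $H$ is the \emph{unique} normal subgroup of $N$ with quotient $A$ is unjustified (and unnecessary), and the cohomological fallback is worse: for a group of multiplicative type $H^1(k_0,A_0)$ need not vanish (e.g.\ $H^1(k_0,\mu_2)=k_0^\times/(k_0^\times)^2$), so there is no reason the relevant class should be trivial. The correct way to finish — which is what the paper does, and which uses only ingredients already present in your write-up — is to invoke assumption (ii) a second time. By \Cref{l:Bo} write $\sigma_\gamma(H)=a_\gamma H a_\gamma^{-1}$ with $a_\gamma\in G(k)$, a priori \emph{not} in $N$. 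Taking normalizers gives $\sigma_\gamma(N)=a_\gamma N a_\gamma^{-1}$; since $N$ is now $\Gamma$-stable this reads $a_\gamma N a_\gamma^{-1}=N$, i.e.\ $a_\gamma\in\sN_G(N)(k)=N(k)$ by (ii). As $H$ is normal in $N=\sN_G(H)$, conjugation by $a_\gamma$ fixes $H$, whence $\sigma_\gamma(H)=a_\gamma H a_\gamma^{-1}=H$ for all $\gamma$, and Galois descent concludes. This is precisely your observation that ``the normalizer of $H$ in $N$ is all of $N$,'' but the reason $a_\gamma$ lands in $N$ is that it normalizes $N$ — not any uniqueness of $H$ among normal subgroups of $N$.
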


\begin{remark}
When $G_0$ is split, condition (i) is automatically satisfied.
\end{remark}

\begin{proof}
Denote $N=\sN_G(H)$.
Since $\veg$ preserves the combinatorial invariants of $G/H$,
by \Cref{l:Bo} for every $\gamma \in \Gamma$ there exists  $a_\gamma \in G(k)$
such that $\sigma_\gamma(H)=a_\gamma H a_{\gamma}^{-1}$.
Then, of course, we have $\sigma_\gamma(N)=a_\gamma N a_{\gamma}^{-1}$.
By assumption (ii) we have $\sN_G(N)=N$, hence $G/N$ admits a wonderful embedding, say, $G/H\into X$.

Since $\siga (N)$ is conjugate to $N$,  we see from \Cref{l:Bo}
that $\veg$ preserves the combinatorial invariants of $G/N$ for all $\gamma \in \Gamma$.
By \Cref{t:ewv}, $X$ admits a $G_0$-equivariant $k_0$-form $X_0$,
given by a semilinear action $\mu \colon \Gamma \to \SAut(X)$.
By \Cref{t:fp}, there exists a $\Gamma$-fixed $k$-point, say $gN$, in the open orbit  $G/N\subseteq X$.
Then $\mu_{\gamma}(gN)=gN$ for all $\gamma \in \Gamma$.
Define  $N'=gNg^{-1}$ and  $H'=gH g^{-1}$. Then $N'=\Stab_G(gN)$ and $N'=\sN_G(H')$.
For every $\gamma\in \Gamma$, since $\veg$ preserves the combinatorial invariants of $G/H'= G/H$,
there exists $a'_\gamma\in G(k)$ such that $\siga(H')=a'_\gamma\hs H'\hs(a'_\gamma)^{-1}$
(we may take  $a'_\gamma=\siga(g)a_\gamma g^{-1}$).
Clearly we have $\siga(N')=a'_\gamma\hs N'\hs (a'_\gamma)^{-1}$.
We shall show that $\siga(N')=N'$ and $\siga(H')=H'$ for all $\gamma\in\Gamma$.

Let $\gamma \in \Gamma$.
Since $\muga$ is $\siga$-equivariant, we see that
\[\Stab_G(\mu_\gamma(g N))=\siga(\Stab_G(gN))=\siga(N').\]
Since $\mu_\gamma(gN)=gN$, we see that $\siga(N')=N'$.
Since $\siga(N')=a'_\gamma\hs N'\hs(a'_\gamma)^{-1}$, we obtain that
\[a'_\gamma\hs N'\hs(a'_\gamma)^{-1}=N'.\]
This means that $a'_\gamma$ normalizes $N'$.
By assumption $\sN_G(N)=N$, hence $\sN_G(N')=N'$, and therefore, $a'_\gamma\in N'(k)=\sN_G(H')(k)$.
We see that $a'_\gamma$ normalizes $H'$, i.e.
\[a'_\gamma\hs H'\hs(a'_\gamma)^{-1}=H'.\]
Since $a'_\gamma\hs H'\hs (a'_\gamma)^{-1}=\siga(H')$, we obtain that
\begin{equation}\label{e:stable}
\siga(H')=H'.
\end{equation}
Since \eqref{e:stable} holds for all $\gamma\in\Gamma$,  by Galois descent there exists a $k_0$-subgroup $H_0\subseteq G_0$
such that $H_0 \times_{k_0} k =H'$. Thus $gHg^{-1}=H_0 \times_{k_0} k$, as required.
\end{proof}

\begin{example}\label{e:horospherical} Let $k,k_0,G,G_0$ be as in \Cref{t:main-qs}.
Let $H\subseteq G$ be a  \emph{horospherical} subgroup, that is,
$H\supseteq U$, where $U$ is the unipotent radical of a Borel subgroup of $G$.
Then $\sN_G(H)$ is parabolic; see Pasquier \cite[Proposition 2.2]{Pasquier}.
It follows that $\sN_G(H)$ is self-normalizing, and hence, hypothesis (ii) of \Cref{t:main-qs} is satisfied. Thus, for such $H$, and for quasi-split $G_0$, $G/H$ admits a $G_0$-equivariant $k_0$-model if and only if the combinatorial invariants of $G/H$ (or, equivalently in this case, the \emph{horospherical datum} of $G/H$; see \cite[Definition 3.6]{MJT}) are preserved under the $*$-action (see Moser-Jauslin and Terpereau \cite[Corollary 3.12]{MJT} for this result in the case where $k_0=\R$).
This happens for example for $H=U$ (This can be seen easily: we simply take $U_0 = [B_0 , B_0]$). See \Cref{e:horospin} below.
\end{example}

\begin{example}\label{e:symmetric} Let $k,k_0,G,G_0$ be as in \Cref{t:main-qs}.
In  Moser-Jauslin and Terpereau \cite[The proof of Corollary 2.7]{MJT2} the authors showed that if $H\subseteq G$ is \emph{symmetric} (i.e., there exists an involutive $k$-automorphism $\theta$ of $G$ such that $G^\theta \subseteq H \subseteq \sN_G(G^\theta)$), then condition (ii) in the theorem above is satisfied. Thus for such an $H$, the homogeneous variety $G/H$ will admit a $G_0$-equivariant $k_0$-form if and only if for every $\gamma \in \Gamma$, $\sigma_\gamma (H)$ will be conjugate in $G$ to $H$. This happens for example if $G_0$ is split. See \Cref{e:Trialitary} below.
\end{example}

\begin{example} Let $k,k_0,G$ be as in \Cref{t:main-qs}.
Let $G_0$ be a split $k_0$-form of $G$, then hypothesis (i) is satisfied. Thus, if $H$ has self-normalizing normalizer, which happens e.g., if $H\subseteq G$ is horospherical (see \Cref{e:horospherical}), or if $H\subseteq G$ is symmetric (see \Cref{e:symmetric}), then the subgroup $H$ is conjugate to a subgroup of $G_0$ defined over $k_0$.
Hence, in this case, $G/H$ admits a $G_0$-equivariant $k_0$-form.
\end{example}


\begin{example}\label{e:Trialitary}
 Let $k_0$ be a $p$-adic field, that is, a finite extension of the field of $p$-adic numbers  $\Q_p$. Let $G=\Spin_8$ over $k=\overline{k_0}$, and let $G_0$ be a   \emph{trialitary} $k_0$-form of $G$, i.e., a $k_0$-form of $G$ where the image of the Galois group in the group of automorphisms of the Dynkin
 diagram is isomorphic to  $\Z/3\Z$  or $S_3$. 
 Then $G_0$ is quasi-split.
 Indeed, $G_0$ is an inner twisted form of a quasi-split trialitary $k_0$-form $G_\qs$ of $G$:
 we have $G_0={}_c G_\qs$, where $c\in Z^1(k_0,G_\qs/Z(G_\qs))$.
 By \Cref{l:H1-H2} below we have $H^1(k_0,G_\qs/Z(G_\qs))=1$.
 It follows that $[c]=1\in H^1(k_0,G_\qs)=1$, and hence, $G_0\simeq G_\qs$.

 Let $H=\Spin(4)\cdot \Spin(4)\subset \Spin(8)$.
Then $H$ is a semisimple subgroup of type $\AA_1^4$, that is, $H$ is isogenous to $\SL(2)^4$.
Let $\gamma\in\Gamma=\Gal(k/k_0)$, and let $\sigma_\gamma\colon G\to G$
denote the corresponding $\gamma$-semilinear automorphism of $G$.
Then $\sigma_\gamma(H)$ is again a semisimple subgroup of type $\AA_1^4$.
An easy argument (see \Cref{l:A1-4} below) shows that any two subgroups of type $\AA_1^4$
in a group of type $\DD_4$ are conjugate.
Thus $\sigma_\gamma(H)$ is conjugate to $H$ for all $\gamma\in\Gamma$.
Since $H\subset G$ is symmetric, by \Cref{r:Ronan}, $H$ has self-normalizing normalizer. By \Cref{t:main-qs}, $G/H$ admits a $G_0$-equivariant $k_0$-form.
\end{example}

I thank Mikhail Borovoi for help with proving \Cref{l:H1-H2,l:A1-4} below.

\begin{lemma}[well-known]
\label{l:H1-H2}
Let $G_\qs$ be a quasi-split  {\em trialitary} $k_0$-form of a simply connected algebraic $k$-group $G$ of type $\DD_4$,
where $k_0$ is a $p$-adic field.
Write $G_\qs^\ad=G_\qs/Z_\qs$, where $Z_\qs=Z(G_\qs)$.
Then $H^1(G_\qs^\ad)=1$.
\end{lemma}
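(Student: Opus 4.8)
The plan is to combine Kneser's vanishing theorem with a local duality computation. Consider the central isogeny $1\to Z_\qs\to G_\qs\to G_\qs^\ad\to 1$; it yields the exact sequence of pointed sets
\[
H^1(k_0,G_\qs)\longrightarrow H^1(k_0,G_\qs^\ad)\xrightarrow{\ \delta\ }H^2(k_0,Z_\qs).
\]
By Kneser's theorem, $H^1(k_0,G_\qs)=1$, because $G_\qs$ is semisimple simply connected and $k_0$ is $p$-adic. Hence it suffices to prove that $H^2(k_0,Z_\qs)=0$: then $\delta$ is the trivial map, so by exactness $H^1(k_0,G_\qs^\ad)$ coincides with the image of the one-element set $H^1(k_0,G_\qs)$, and therefore $H^1(k_0,G_\qs^\ad)=1$.

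It remains to compute $H^2(k_0,Z_\qs)$, for which I would first pin down $Z_\qs$ as a Galois module. Since $G$ is simply connected of type $\DD_4$, the $k_0$-group $Z_\qs$ is finite \'etale with $Z_\qs(k)\cong(\Z/2\Z)^2$, and $\Aut\bigl((\Z/2\Z)^2\bigr)\cong S_3$ permutes the three nontrivial elements. The $\Gamma$-action on $Z_\qs$ is induced by the $*$-action $\ve\colon\Gamma\to\Aut\,\BRD(G)$ (see \Cref{ss:star-action}); for $G$ simply connected of type $\DD_4$ the group $\Aut\,\BRD(G)$ equals the Dynkin-diagram automorphism group $S_3$, acting faithfully on $Z_\qs(k)$. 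By definition, $G_\qs$ being \emph{trialitary} means that the image of $\Gamma$ there is $\Z/3\Z$ or $S_3$; either acts transitively on the three nontrivial elements of $Z_\qs(k)$, hence also on those of the Cartier dual $Z_\qs^D$ (whose $\Gamma$-action goes through a transitive subgroup of $S_3$ as well). Consequently $H^0(k_0,Z_\qs^D)=Z_\qs^D(k)^\Gamma=0$, and local Tate duality over the $p$-adic field $k_0$ (legitimate since $\charr k_0=0$, so $Z_\qs$ is a finite Galois module) gives
\[
H^2(k_0,Z_\qs)\ \cong\ \Hom\bigl(H^0(k_0,Z_\qs^D),\,\Q/\Z\bigr)\ =\ 0,
\]
completing the argument.

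The only subtle ingredient — the ``main obstacle,'' modest as it is — is the identification of the Galois module $Z_\qs$: one must verify that the $*$-action indeed makes $\Gamma$ act on $Z_\qs(k)\cong(\Z/2\Z)^2$ through the triality $S_3$ by permutations of the three nontrivial central elements, and that ``trialitary'' forces this permutation action to be transitive (both $\Z/3\Z$ and $S_3$ act transitively on a three-element set). This follows from the compatibility of the $*$-action with $\Aut\,\BRD(G)$ recalled in \Cref{ss:star-action}, together with the fact that for type $\DD_4$ the diagram-automorphism group $S_3$ acts faithfully — as all of $\Aut\bigl((\Z/2\Z)^2\bigr)$ — on the center. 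Everything else is the standard package: the long exact cohomology sequence of a central isogeny, Kneser's theorem over a $p$-adic field, and local Tate duality.
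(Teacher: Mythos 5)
Your proposal is correct and follows essentially the same route as the paper: the long exact sequence of the central isogeny $1\to Z_\qs\to G_\qs\to G_\qs^\ad\to 1$, Kneser's vanishing of $H^1(k_0,G_\qs)$, and the vanishing of $H^2(k_0,Z_\qs)$ via local duality from the transitivity of the trialitary Galois action on the three nontrivial elements of $Z_\qs(k)\cong(\Z/2\Z)^2$. The paper phrases the duality step as Tate--Nakayama duality with $\X^*(Z)^\Gamma$ rather than with the Cartier dual $Z_\qs^D$, but since $H^0(k_0,Z_\qs^D)=\X^*(Z)^\Gamma$ this is the same computation.
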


\begin{proof}
We compute $H^2(k_0,Z_\qs)$.
By the Tate-Nakayama duality (see Serre \cite[Section II.5.2, Theorem 2]{Serre}),
$H^2(k_0,Z_\qs)$ is dual to the group of fixed points $\X^*(Z)^\Gamma$, where $Z=Z(G)$ and the Galois action on $\X^*(Z)$ is induced from the $k_0$-form $Z_\qs$ of $Z$.
It is clear that $\X^*(Z)^\Gamma=1$ (notice that $Z(k)\cong\Z_2\times \Z_2$, and the Galois action is transitive on $Z(k)\backslash\{e\}$), hence $H^2(k_0,Z_\qs)=1$.

Now we compute $H^1(k_0,G_\qs^\ad)$.
The short exact sequence
\[1\to Z_\qs\to G_\qs\to G^\ad_\qs\to 1\]
gives rise to a cohomology exact sequence
\[ 1=H^1(k_0,G_\qs)\to H^1(k_0,G^\ad_\qs)\to H^2(k_0,Z_{\qs})=1,\]
in which $H^1(k_0,G_\qs)=1$ by Kneser's theorem; see Platonov and Rapinchuk \cite[Theorem 6.4]{Platonov-Rapinchuk}.
We conclude that $H^1(k_0,G^\ad_\qs)=1$, as required.
\end{proof}

\begin{lemma}[well-known]
\label{l:A1-4}
Any two connected $k$-subgroups $H,H'$   of type $\AA_1^4$
in a simple $k$-group of type $\DD_4$ are conjugate.
\end{lemma}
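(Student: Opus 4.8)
The statement is Lemma~\ref{l:A1-4}: any two connected $k$-subgroups $H,H'$ of type $\AA_1^4$ in a simple $k$-group $G$ of type $\DD_4$ are conjugate. Since we work over the algebraically closed field $k$ of characteristic $0$, the plan is to pass to Lie algebras and use the classification of semisimple subalgebras of $\gg=\Lie(G)$, which for $G$ simply connected (one may replace $G$ by its simply connected cover $\Spin_8$, since conjugacy of subgroups is detected there) gives that $H$ and $H'$ are conjugate iff the subalgebras $\gh=\Lie(H)$ and $\gh'=\Lie(H')$ are conjugate under $\Aut(\gg)$ — and $\Inn(\gg)=G^{\ad}(k)$ acts with index related to the outer automorphism group, so a small extra argument may be needed to upgrade $\Aut$-conjugacy to $\Inn$-conjugacy.

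**Key steps.** First I would recall that a subgroup of type $\AA_1^4$ has semisimple Lie algebra $\mathfrak{sl}_2^{\oplus 4}$, which is of rank $4 = \mathrm{rank}(\gg)$; hence $\gh$ is a \emph{regular} (equal-rank) semisimple subalgebra, so by the Borel--de Siebenthal algorithm it is $\Inn(\gg)$-conjugate to a subalgebra generated by root vectors for a closed symmetric subsystem of $R(\DD_4)$ of full rank $4$ whose irreducible components are all of type $\AA_1$. Concretely, using the extended Dynkin diagram of $\DD_4$ (central node joined to all four outer nodes), one checks that the only way to obtain four mutually orthogonal $\AA_1$'s spanning a rank-$4$ sublattice is the subsystem $\{\pm e_1\pm e_2,\ \pm e_3\pm e_4\}$ in the standard realization $R=\{\pm e_i\pm e_j\}$ — equivalently the $\DD_2\times\DD_2$ subsystem, which is $\AA_1^4$ since $\DD_2=\AA_1\times\AA_1$. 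Thus there is, up to $\Inn(\gg)$-conjugacy and up to the $\AA_1$-components not generating a \emph{closed} full-rank subsystem, essentially one candidate, giving $\gh \sim \gh'$ under $\Aut(\gg)$.

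**Main obstacle.** The subtle point is that $\Aut(\gg)$ for type $\DD_4$ is much larger than $\Inn(\gg)$: the outer automorphism group is $S_3$ (triality), so a priori two $\AA_1^4$-subalgebras could be $\Aut$-conjugate but not $\Inn$-conjugate, which is exactly what Lemma~\ref{l:A1-4} must rule out (the lemma is used for \emph{inner} conjugacy of $\Spin(4)\cdot\Spin(4)$). The resolution I would use: the subsystem $\DD_2\times\DD_2 \subset \DD_4$ is stable, up to $\Inn(\gg)$, under all of triality — indeed triality permutes the three "legs" of $\DD_4$, and the $\DD_2\times\DD_2$ subsystem (the "$i$-th and $j$-th coordinate blocks") is characterized intrinsically as the set of long roots... more carefully, one argues that the Weyl group $W(\DD_4)$ together with the triality $S_3$ acts on the set of full-rank closed $\AA_1^4$-subsystems, and this action is transitive; since the triality representatives of $W(\DD_4)\rtimes S_3 / W(\DD_4)$ can be realized by inner automorphisms conjugating one such subsystem to another (because any two are $W(\DD_4)$-conjugate once we know there is only one up to $\Aut$), we get that all $\AA_1^4$-subalgebras are $\Inn(\gg)$-conjugate. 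Then lifting from Lie algebras to groups: $H$ is the connected subgroup with Lie algebra $\gh$ (in characteristic $0$, $H\mapsto \Lie(H)$ is injective on connected subgroups and $\Inn(\gg)=(\mathrm{Int}\ G)(k)$ acts compatibly), so $\Inn(\gg)$-conjugacy of $\gh,\gh'$ gives $G(k)$-conjugacy of $H,H'$, completing the proof.

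**Alternative, more elementary route.** If the triality bookkeeping above is unpleasant, I would instead argue directly inside $\Spin_8$: realize $\gg=\mathfrak{so}_8$, so a subalgebra of type $\AA_1^4$ acting on the $8$-dimensional standard representation decomposes $k^8$ into a sum of irreducibles for $\mathfrak{sl}_2^{\oplus 4}$; dimension and the invariant quadratic form force the decomposition to be four copies of the tensor-type $2\otimes 2$ pattern, i.e.\ $k^8 = \bigoplus (V_i\otimes V_j)$ with each factor a $\mathfrak{sl}_2$-standard module — this pins down $\gh$ up to $\mathrm{O}_8(k)$-conjugacy, and one then checks the ambiguity between $\mathrm{O}_8$ and $\mathrm{SO}_8$ (and hence the lift to $\Spin_8$) does not create a second class, because the relevant subgroup $\Spin(4)\cdot\Spin(4)$ is already connected and its normalizer meets both components. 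The hard part in either approach is genuinely the triality/outer-automorphism ambiguity; everything else is the standard Borel--de Siebenthal classification plus the characteristic-zero dictionary between connected subgroups and subalgebras.
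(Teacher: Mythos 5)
Your overall strategy (reduce to the classification of full-rank subsystems of type $\AA_1^4$ in the root system of $\DD_4$) is sound and is, at its core, the same reduction the paper makes; but the one step that actually carries the weight of the lemma is not established in your write-up. The crux is exactly what you flag as the ``main obstacle'': showing that any two such subsystems are conjugate under the Weyl group $W(\DD_4)$ itself --- equivalently under inner automorphisms --- and not merely under $\Aut(\gg)\cong\Inn(\gg)\rtimes S_3$. Your proposed resolution is circular: you assert that the triality coset representatives can be realized by inner automorphisms ``because any two are $W(\DD_4)$-conjugate once we know there is only one up to $\Aut$,'' but $\Aut$-conjugacy does not imply $W$-conjugacy; that implication is precisely the statement to be proved. (The aside about characterizing the subsystem via ``long roots'' also goes nowhere, since $\DD_4$ is simply laced.) Note moreover that there is not literally a unique $\AA_1^4$-subsystem: writing $R=\{\pm\ve_i\pm\ve_j\}$, there are three of them, one for each partition of $\{1,2,3,4\}$ into two pairs, and one must check that $W(\DD_4)=S_4\ltimes(\Z/2)^3$ permutes these three transitively.

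The missing verification is short, and it is what the paper does. Place maximal tori of $H$ and $H'$ in a common maximal torus $T$ of $G$ (possible since $\AA_1^4$ has full rank $4$); then $R(H,T)\subset R(G,T)$ consists of four pairwise orthogonal pairs $\pm\beta_i$, and likewise for $H'$. Since $W$ acts transitively on roots, one may assume $\beta_0=\ve_1+\ve_2$ and, after changing signs, that the other $\beta_i$ are positive; the positive roots of $\DD_4$ orthogonal to $\ve_1+\ve_2$ are exactly $\ve_1-\ve_2$, $\ve_3-\ve_4$, $\ve_3+\ve_4$, so the remaining pairs are forced and $R(H,T)$ is $W$-conjugate to $\{\pm\ve_1\pm\ve_2,\ \pm\ve_3\pm\ve_4\}$. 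As $W=\sN_G(T)/T$ is realized by elements of $G(k)$, this gives inner conjugacy of $H$ and $H'$ directly, and triality never enters. Inserting this computation in place of your ``main obstacle'' paragraph closes your Borel--de Siebenthal argument. Your alternative route through the $8$-dimensional representation would need separate repair: the standard module decomposes as \emph{two} copies of a $2\otimes 2$ pattern, $k^8=(V_1\otimes V_2)\oplus(V_3\otimes V_4)$, not four, and one must still rule out other faithful $8$-dimensional orthogonal representations of $\mathfrak{sl}_2^{\oplus 4}$.
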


\begin{proof}
Let $T,T'$ be  maximal tori in $H,H'$ respectively; then $T$ and $T'$ are maximal tori in $G$.
Up to conjugacy, we may assume that $T'=T$.
Thus $H,H'\supset T$.
We see that $H$  defines a subset $\Theta=R(H,T)\subset R(G,T)=R$ with the following properties:
(a) if $\beta\in\theta$, then $-\beta\in \Theta$, and (b) there are four pairwise orthogonal roots
$\beta_0,\beta_1,\beta_3,\beta_4$ in $\Theta$.
Similarly,  $H'$  defines a subset $\Theta'=R(H',T)\subset R)$  with the same properties.
It suffices to show that there exists an element $w$ of the Weyl group $W(R)=W(G,T)$
sending $\Theta$ to $\Theta'$.

We write $R$ as in Bourbaki \cite[Plate IV]{Bourbaki}:
\[R=\{\pm\ve_i\pm\ve_j\mid 1\le i<j\le 4\}.\]
We set
\[\Phi=\{\alpha_0=\ve_1+\ve_2,\ \alpha_1=\ve_1-\ve_2,\  \alpha_3=\ve_3-\ve_4,\ \alpha_4=\ve_3+\ve_4\}.\]
Since $W$ acts on $R$ transitively, there exists an element $w\in W$ sending $\beta_0$ to $\alpha_0$.
Up to $W$-conjugacy we may assume that $\beta_0=\alpha_0$.
Up to sign, we may assume that $\beta_1,\,\beta_3,\,\beta_4$ are positive roots.
Thus $\beta_1,\,\beta_3,\,\beta_4$ are three positive roots in $R$ which are orthogonal to $\alpha_0$.
Now it is clear that the set $\{\beta_1,\,\beta_3,\,\beta_4\}$ coincides with the set $\{\alpha_1,\,\alpha_3,\,\alpha_4\}$.
It follows that $\Theta=\Phi$.

We have shown that $\Theta$ is $W$-conjugate to $\Phi$.
Similarly, $\Theta'$ is $W$-conjugate to $\Phi$.
Thus $\Theta$ and $\Theta'$ are $W$-conjugate, and hence, $H$ and $H'$ are conjugate in $G$.
\end{proof}

\begin{example}\label{e:horospin}
Let $k,G,k_0,G_0$ be as in \Cref{e:Trialitary}. Let $H\subseteq G$ be a horospherical subgroup with horospherical datum $(I,M)$; see Moser-Jauslin and Terpereau \cite[Definition 3.6]{MJT}. Here $I\subseteq S$ is a subset of the set of simple roots, and $M\subseteq\X ^*(B)$ is a sublattice whose elements satisfy $\langle \chi,\alpha\rangle=0$ for all $\alpha \in I$ (i.e., characters admitting an extension to $P_I$). Then, since $\sN_G (\sN_G(H))=\sN_G(H)$, by \Cref{t:main-qs} and Pasquier \cite[Proposition 2.4]{Pasquier}, $G/H$ admits a $G_0$-equivariant $k_0$-form if and only if $I,M$ are invariant under the $*$-action.
Here are some examples of such $(I,M)$:\\
In the notations of \Cref{l:A1-4} (with $\alpha_2=\epsilon_2-\epsilon_3$), we can take e.g. $I=\{\alpha_2\}$ or $I=\{\alpha_1,\alpha_3,\alpha_4\}$. In the second case, any $M\subseteq \Span_\Z \{\alpha_1,\alpha_3,\alpha_4\}^\perp=\Span_\Z\{\frac{\epsilon_1+\epsilon_2}{2}\}=\Span_\Z\{\frac{\alpha_1+2 \alpha_2+\alpha_3 +\alpha_4}{2}\}$ would be $\Gamma$-invariant.
Here are some examples of $\Gamma$-invariant $M$ for the case $I=\{\alpha_2\}$:
\begin{enumerate}
\item[(i)] $M_1=n \cdot \Span_\Z\{\epsilon_1+\frac{\epsilon_2+\epsilon_3}{2}\}=n\cdot \Span_\Z\{\frac{3\alpha_2}{2}+\alpha_1+\alpha_3+\alpha_4\}$
\item[(ii)] $M_2=m\cdot \Span_\Z\{\frac{\epsilon_2+\epsilon_3-\epsilon_1-\epsilon_4}{2},\epsilon_4\}=m\cdot \Span_\Z\{\frac{\alpha_1-\alpha_3}{2},\frac{\alpha_3-\alpha_4}{2}\}$
\item[(iii)] $M_4=M_1\oplus M_2$
\item[(iv)] $M_5=n\cdot \Ker(\alpha_2^\vee)$
\end{enumerate}
Here, $n,m$ are integers.
\end{example}



\section{Existence of a form over a large field}
\label{s:final}

Let $k,k_0,G,G_0$ be as in \Cref{ss:setup}.
Write $\Gbar=G/Z(G)$ for the corresponding adjoint group, and $\Gtil$ for the universal covering of $G'=[G,G]$.
By \Cref{p:inn-qs} we may write $G_0=\hs _c(G_\qs)$,
where $G_\qs$ is a quasi-split $k_0$-form of $G$ and $c\in Z^1(k_0, \Gbar_\qs)$.
Here we write $\Gbar_\qs=G_\qs/Z(G_\qs)$.

\begin{subsec}\label{ss:Tits}
We write $\Ztil_\qs$ for the center $Z(\Gtil_\qs)$ of the universal cover $\Gtil_\qs$ of the connected semisimple group
$[G_\qs,G_\qs]$.
Note that $\Ztil_\qs=\Ztil_0$.
The short exact sequence
\[1\to \Ztil_\qs\to \Gtil_\qs \to \Gbar_\qs\to 1\]
induces a cohomology exact sequence
\[H^1(k_0,\Ztil_\qs)\to  H^1(k_0,\Gtil_\qs)\to H^1(k_0,\Gbar_\qs)\labelto{\Delta} H^2(k_0,\Ztil_\qs).\]
By definition, the \emph{Tits class of} $\Gtil_0$  is the image of $[c]\in Z^1(k_0,\Gbar_\qs)$ in $H^2(k,\Ztil_\qs)$
under the connecting map $\Delta\colon H^1(k_0,\Gbar_\qs)\to{\Delta} H^2(k_0,\Ztil_\qs)$; see \cite{KMRT}, Section 31, before Proposition 31.7.
\end{subsec}

\begin{subsec} \label{ss:Aqs}
Let $H\subset G$ be a spherical subgroup. Set $A=\sN_G(H)/H$, which is a group multiplicative type (i.e. a closed subgroup of a torus);
see Losev \cite[Theorem 2 and Definition 4.1.1(1)]{Losev}.
The  character group $\X^*(A)$ of $A$ is a quotient group of the weight lattice $\sX$ of $Y=G/H$.
The group $\X^*(A)$ and the surjective homomorphism $\sX\to\X^*(A)$ can be computed from the combinatorial invariants of $G/H$; see Losev \cite[Theorem 2 and Definition 4.1.1(1)]{Losev} again.

We use the notation of \Cref{ss:star-action}.
In particular, for any $\gamma\in\Gamma$ we have an automorphism $\veg\in\Aut\,\BRD(G,T,B)$.
Assume that  $\veg$ preserves the combinatorial invariants of $G/H$ for all $\gamma\in \Gamma$.
Then $\veg$ naturally acts on $\sX$, on $\X^*(A)$, and on $\X^*(\Ztil)$, and the homomorphisms
\[ \sX\to\X^*(A) \quad\text{and}\quad \X^*(A)\to\X^*(\Ztil)\]
are $\Gamma$-equivariant.
Thus we obtain a $k_0$-form of $A$, which we denote $A_\qs$, and a $k_0$-homomorphism
$\Ztil_\qs\to A_\qs$.
\end{subsec}

\begin{theorem}\label{t:main-general}
Let $k$ be an algebraically closed field of characteristic $0$, and let $k_0\subseteq k$ be a subfield such that $k/k_0$ is Galois. Denote by $\Gamma = \Gal (k/k_0)$ the Galois group. Let $G$ be a connected reductive $k$-group, $G_0$ a $k_0$-form of $G$. Let $H\subseteq G$ be a spherical subgroup.
Assume that:\\
(i)  $\Gamma$ preserves the combinatorial invariants of $G/H$ when acting on $\BRD(G)$ via the homomorphism
$\ve\colon\Gamma\to\Aut\,\BRD(G)$  defined by the $k_0$-form $G_0$ of $G$;\\
(ii) $k_0$ is a large field;\\
(iii) $\charr k_0=0$;\\
(iiii) $\sN_G(\sN_G(H))=\sN_G(H)$.\\
Then $Y=G/H$ admits a $G_0$-equivariant $k_0$-form if and only if the image in $H^2(k_0,A_\qs)$ of the Tits class $t(\Gtil_0)\in H^2(k_0,Z(\Gtil_0))$ is 1.
\end{theorem}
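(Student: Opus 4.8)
The strategy is to reduce the general case to the quasi-split case handled by \Cref{t:main-qs}, by twisting. Write $G_0 = {}_c(G_\qs)$ with $c \in Z^1(k_0,\Gbar_\qs)$, as in the setup of this section. The quasi-split form $G_\qs$ induces the same $\ast$-action on $\BRD(G)$ as $G_0$ (by \Cref{p:inn-qs}), so hypothesis (i) holds verbatim for $G_\qs$ in place of $G_0$; together with hypotheses (ii), (iii), (iiii) and the quasi-splitness of $G_\qs$, \Cref{t:main-qs} applies and yields a $k_0$-subgroup $H_\qs \subseteq G_\qs$ with $(H_\qs)_k$ conjugate to $H$; replacing $H$ by a conjugate, we may assume $H = (H_\qs)_k$, so that $Y_\qs := G_\qs/H_\qs$ is a $G_\qs$-equivariant $k_0$-form of $Y$. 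In particular $Y$ always admits a \emph{quasi-split} equivariant form, and the question is whether the \emph{inner twist by $c$} of this form survives, i.e. whether the cocycle $c$ can be transferred to a cocycle defining a $G_0$-equivariant $k_0$-form of $Y$.

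The mechanism for this transfer is the canonical homomorphism of $k_0$-groups coming from \Cref{ss:Aqs}: we have $A_\qs = \sN_{G_\qs}(H_\qs)/H_\qs$, a $k_0$-group of multiplicative type, with $A(k) \isoto \Aut^G(Y)$, and $\Gbar_\qs = G_\qs/Z(G_\qs)$ acts on $Y_\qs$ so that the $G_\qs$-equivariant automorphisms of $Y_\qs$ that are ``inner'' are governed by $A_\qs$. Concretely, a $1$-cocycle $c \in Z^1(k_0,\Gbar_\qs)$ produces a $G_0$-equivariant $k_0$-form of $Y$ precisely when its image, under the natural connecting-type map, in $H^2(k_0,A_\qs)$ is trivial: the obstruction to twisting the equivariant form $Y_\qs$ by $c$ lives in $H^2(k_0,A_\qs)$ because $A_\qs = \sN_{G_\qs}(H_\qs)/H_\qs$ is exactly the ``center'' of the groupoid of $G$-equivariant self-maps of $Y$. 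This is the content of \cite[Theorem 1.6]{BG2} (cited in the paper's plan for this section), which I would invoke: $Y = G/H$ admits a $G_0$-equivariant $k_0$-form if and only if the image of $[c]$ in $H^2(k_0,A_\qs)$ vanishes. It remains only to identify this image with $\vktil_*(t(\Gtil_0))$.

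For that identification, recall from \Cref{ss:Tits} that the Tits class $t(\Gtil_0)$ is, by definition, the image $\Delta([c])$ of $[c] \in H^1(k_0,\Gbar_\qs)$ under the connecting map $\Delta\colon H^1(k_0,\Gbar_\qs)\to H^2(k_0,\Ztil_\qs)$ arising from $1 \to \Ztil_\qs \to \Gtil_\qs \to \Gbar_\qs \to 1$. On the other hand, \Cref{ss:Aqs} furnishes the $k_0$-homomorphism $\vktil\colon \Ztil_\qs = \Ztil_0 \to A_\qs$, factoring as $\Ztil \to Z(G) \into \sN_G(H) \to A$, and the induced $\vktil_*\colon H^2(k_0,\Ztil_0) \to H^2(k_0,A_\qs)$. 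The key compatibility is that the image of $[c]$ in $H^2(k_0,A_\qs)$ appearing in the previous paragraph equals $\vktil_*(\Delta([c])) = \vktil_*(t(\Gtil_0))$. This follows by chasing the commutative diagram of $k_0$-groups in which $Z(G_\qs) \to A_\qs$ factors through $\Gbar_\qs$'s action on $Y_\qs$: the $\Gbar_\qs$-action on $Y_\qs$ restricted to $Z(G_\qs)/Z(G_\qs) = 1$ is trivial, so the obstruction map $H^1(k_0,\Gbar_\qs) \to H^2(k_0,A_\qs)$ factors through $\Delta$, and the resulting map $H^2(k_0,\Ztil_0) \to H^2(k_0,A_\qs)$ is precisely $\vktil_*$ by construction of $\vktil$ in \Cref{ss:Aqs}. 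Putting these together gives the stated criterion.

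The main obstacle is the bookkeeping in the last paragraph: one must make the passage from ``twisting $Y_\qs$ by $c \in Z^1(k_0,\Gbar_\qs)$'' to the obstruction class in $H^2(k_0,A_\qs)$ completely precise and compatible with the definition of $\vktil$, i.e. verify that the two a priori different maps $H^1(k_0,\Gbar_\qs) \to H^2(k_0,A_\qs)$ — one from the deformation theory of equivariant forms (via \cite{BG2}), the other the composite $\vktil_* \circ \Delta$ — literally coincide, not merely up to some automorphism. Once \cite[Theorem 1.6]{BG2} is taken as a black box this is a diagram chase in nonabelian Galois cohomology, but it requires care since $A_\qs$ need not be connected and the ``twisting'' is of an equivariant form rather than of a group. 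Everything else is a formal reduction.
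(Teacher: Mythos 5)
Your proposal follows essentially the same route as the paper: apply \Cref{t:main-qs} to the quasi-split inner form $G_\qs$ (which induces the same $*$-action on $\BRD(G)$, by \Cref{p:inn-qs}) to obtain $H_\qs$ and the model $G_\qs/H_\qs$, and then invoke the Borovoi--Gagliardi criterion for the inner twist ${}_c G_\qs$. The only difference is that the paper cites the version of that criterion already phrased in terms of the Tits class (\cite[Theorem 3.8]{BG2}), so your final paragraph identifying the obstruction with $\vktil_*(t(\Gtil_0))$ via $\vktil_*\circ\Delta$ is bookkeeping that the paper delegates to the reference.
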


Note that assumption (i) is a necessary condition for $G/H$ to have a $k_0$-form; see Borovoi \cite[Proposition 8.12]{BG}.

\begin{proof}
The $k_0$-forms $G_0$ and $G_\qs$ of $G$ define the same homomorphism
\[ \ve^0=\ve^\qs\colon\, \Gamma\to\Aut\,\BRD(G).\]
Since $\ve^0$ preserves the combinatorial invariants of $G/H$, so does $\ve^\qs$.
Since $G_\qs$ is quasi-split,
by \Cref{t:main-qs} the subgroup $H$ is conjugate to some subgroup defined over $k_0$ in $G_\qs$, say $H_\qs\times_{k_0} k$,
where $H_\qs\subset G_\qs$.
Then clearly $G_\qs/H_\qs$ is a $G_\qs$-equivariant $k_0$-form of $G/H$.
Recall that $G_0=\hs_c (G_\qs)$, where $c\in Z^1(k_0, \Gbar_\qs)$.
By Borovoi and Gagliardi \cite[Theorem 3.8]{BG2}
the homogeneous variety $G/H$ admits a $G_0$-equivariant $k_0$-form if and only if
the image  of  $t(\Gtil_0)$  in $H^2(k_0,A_\qs)$ is 1, as required.
\end{proof}

\begin{example}
Let $G=\SL_{2,k}$, and let $H=T$ or $H=\sN_G(T)$. Then $G/H$ admits a $G_0$-equivariant $k_0$-form (for any $k_0$ and any $G_0$). Indeed, for any $k_0$-form $G_0$ of $G$ (and any field $k_0$), $G_0$ has a maximal torus $T_0$ defined over $k_0$, and thus we obtain a $k_0$-form $G_0/T_0$ of $G/T$.
\end{example}

See \cite{BG2} for many more interesting non-quasi-split examples, as well as a generalization of the main theorems presented in this paper to the case where $k_0$ is an arbitrary field of characteristic 0, and $H\subseteq G$ is an arbitrary spherical subgroup.

\end{document}